\documentclass[11pt,article]{amsart}
\usepackage[T1]{fontenc}
\usepackage[english]{babel}
\usepackage{enumerate}
\usepackage{mathtools}
\usepackage{esint}
\usepackage{setspace}
\usepackage{mathrsfs}
\usepackage{bbm}
\usepackage{geometry}
\usepackage{amssymb,amsthm,amsmath} 
\usepackage{indentfirst}
\usepackage{xcolor}
\usepackage{graphicx, caption}
\usepackage{pgfplots}
\usepackage{subfig}
\usepackage[countmax]{subfloat}
\renewcommand{\epsilon}{\varepsilon}

\makeatletter

\usepackage{url}									
\usepackage{hyperref}	
\usepackage{cleveref}

\usepackage{soul}
\usepackage{comment}

\newcommand{\numberset}{\mathbb}

\newcommand{\R}{\numberset{R}}
\newcommand{\C}{\numberset{C}}

\newcommand{\D}{\mathcal{D}_m}

\newcommand{\tnorm}[1]{%
\left|\kern-0.25ex\left|\kern-0.25ex\left|
#1
\right|\kern-0.25ex\right|\kern-0.25ex\right|
}
\numberwithin{equation}{section}

\newtheorem{definition}{Definition}[section]
\newtheorem{lemma}{Lemma}[section]
\newtheorem{theorem}{Theorem}[section]

\newtheorem{remark}{Remark}[section]

\theoremstyle{remark}

\theoremstyle{definition}

\begin{document}

\title[GWP for Dirac equation on compact manifolds]{Global well-posedness and growth of Sobolev norms for nonlinear Dirac equations with Yukawa potential on compact manifolds}

\author{Federico Cacciafesta}
\address{Federico Cacciafesta: 
Dipartimento di Matematica, Universit\'a degli studi di Padova, Via Trieste, 63, 35131 Padova PD, Italy}
\email{federico.cacciafesta@unipd.it}

\author{Simone Mundula}
\address{Simone Mundula: Dipartimento di Matematica, Universit\'a degli studi di Padova, Via Trieste, 63, 35131 Padova PD, Italy
}
\email{simone.mundula@math.unipd.it}

\keywords{Nonlinear Dirac equation;
compact manifolds;
growth Sobolev norms.}%

\subjclass[2020]{%
35Q40,
35Q41.
}%

\begin{abstract}
   In this paper we study a nonlinear Dirac equation with intrinsic Yukawa potential on compact manifolds. In dimension $2$, and in dimension $3$ after projection onto the positive-energy sector, we prove global well-posedness together with exponential upper bounds on the growth of higher-order Sobolev norms. The main ingredient that allows to extend the local solutions, which are constructed by standard contraction argument, is a commutator-based higher-order energy method, combined with Gr\"onwall's lemma, which provides the necessary a priori Sobolev bounds on the solutions.

\end{abstract}

\maketitle

\section{Introduction}
The study of dispersive equations on compact manifolds has attracted considerable attention as a natural extension of the Euclidean theory to geometrically nontrivial settings. On the one hand, compact manifolds provide a canonical framework in which to investigate the influence of curvature, topology and spectral properties on linear and nonlinear dispersive dynamics. On the other hand, they exhibit genuinely new phenomena, such as recurrence and refocusing, which weaken the dispersion available in flat space. In particular indeed, in the absence of spatial escape, one cannot in general expect global-in-time dispersive estimates as in the Euclidean case.
In particular, Strichartz estimates for dispersive equations on compact manifolds have been object of thorough investigation in the years, and they are now quite well understood: we refer the interested reader to \cite{burgertzv} for the Schr\"odinger, \cite{kap} for the wave and \cite{cacdanmen} for the Dirac equation, and references therein. 
These estimates are in general only {\em local} in time: as a consequence, studying global well-posedness for nonlinear dispersive systems, mostly in a low-regularity setting, can become a delicate issue, as the standard contraction argument based on Strichartz estimates only provides local solutions, and in order to obtain global ones some new ingredients are needed, as e.g. providing some uniform bound on the ``controlling norm" of the solution in order to avoid blow-up. 
In this context, a natural question is not only whether solutions exist globally, but also whether one can
quantify the possible transfer of energy towards high frequencies along the evolution. This leads to the
problem of {\em controlling the growth of higher Sobolev norms}, which has become a central theme in the study
of nonlinear dispersive Hamiltonian PDE. This topic has seen, in the years, several striking contributions; we do not attempt to provide a comprehensive list of references, but we wish to mention at least the pioneering papers \cite{bou}, \cite{sta}.
More specifically, in the compact-manifold setting, some upper bounds on higher-order Sobolev norms have been obtained both for nonlinear Schr\"odinger (see \cite{platzvvis}) and Klein-Gordon equations (see \cite{pam}) by modified energy arguments. The present manuscript may be viewed as a first contribution to this line of research for nonlinear Dirac equations.
\medskip

The purpose of this paper is thus to start investigating the global dynamics for some nonlinear Dirac equation on compact manifolds without boundary; in particular, we shall prove global well-posedness as well as an exponential bound on the growth of the $H^m$, $m\geq1$ Sobolev norms of the solutions in the case of the so called {\em intrinsic Yukawa potential}, that is the following\footnote{Throughout the paper, $\Delta_g$ denotes the standard, scalar Laplace-Beltrami operator on $M$, with the usual convention $-\Delta_g\geq 0$ so that $(\mu^2-\Delta_g)$ is strictly positive and self-adjoint on $L^2(M)$.}
\begin{equation}
\label{eq:nonlinearity}
\mathcal{N}(\psi)=V_\psi \psi=(\mu^2-\Delta_g)^{-1}|\psi|^2 \psi,\qquad \mu\neq 0.
\end{equation}

In the Euclidean case, the study of the Cauchy problem for the Dirac equation with a Yukawa-type nonlinearity is a well investigated subject (see e.g. \cite{geosha}, \cite{cholee}, \cite{tes}); its interest is motivated by the fact that the Yukawa-type nonlinearity may be viewed as a static mean-field reduction of the Dirac--Klein--Gordon system (see e.g. \cite{chagla}). The model we consider here should be understood as an intrinsic, natural compact-manifold analogue of this Yukawa/Hartree mechanism. Also, as a further motivation, we should mention that the well-posedness for the Schr\"odinger equation on compact manifolds with a nonlinearity in the form \eqref{eq:nonlinearity} has been object of investigation (see \cite{gerpie}).

\medskip

We are now in position to state our main Theorems which, to the very best of our knowledge, are the very first ones concerning global well-posedness for nonlinear Dirac equations on compact manifolds. For the sake of convenience, as the models (and the results) considered in dimensions $2$ and $3$ are slightly different, we present them separately. We postpone all the preliminary definitions and properties to Section \ref{sec:pre}.
\medskip

$\bullet$ {\bf $2$d Result.} 
We consider the nonlinear Dirac equation
\begin{equation}
\label{eq:NLD}
\begin{cases}
i\partial_t \psi =\D\psi - \mathcal{N}(\psi), \quad \psi(t,x): \mathbb{R}\times M^2\rightarrow \C^2 \\
\psi(0,x) = \psi_0(x),\qquad\quad\: \psi_0\in H^s(M^2)
\end{cases}
\end{equation}
where $(M^2,g)$ is a  compact $2$-dimensional Riemannian manifold without boundary equipped with a spin structure, $\D$ denotes the massive Dirac operator on $M^2$, with
$\mathcal{N}(\psi)$ given by \eqref{eq:nonlinearity}.

Our first main result is the following:

\begin{theorem}\label{thm:main-2d} 
\begin{enumerate} \item[(i)] For every $s\ge \frac12$ and every $\psi_0\in H^s(M^2)$, there exist $T=T(\|\psi_0\|_{H^s})>0$ and a unique solution \[ \psi\in C([0,T];H^s(M^2)) \] to system \eqref{eq:NLD}.
Moreover, the usual blow-up alternative holds: if $T_*(\psi_0)<\infty$ denotes the maximal forward lifespan, then \[ \|\psi(t)\|_{H^s(M^2)}\to\infty \qquad\text{as }t\uparrow T_*(\psi_0). \] 
\item[(ii)] Let $k\geq 1$ be an integer and let $\psi_0\in H^k(M^2)$.
Let
\[
\psi\in C([0,T_*(\psi_0));H^k(M^2))
\]
be the maximal $H^k$-solution to \eqref{eq:NLD} given by {\rm(i)}.
Then there exist constants $C_{k,0},C_{k,1}>0$, depending only on
$(M^2,g)$, $m$, $k$, $\mu$, $\|\psi_0\|_{L^2(M^2)}$, and $\|\psi_0\|_{H^k(M^2)}$, such that
\[
\|\psi(t)\|_{H^k(M^2)}
\leq
C_{k,0}e^{C_{k,1}t},
\qquad
t\in [0,T_*(\psi_0)).
\]
In particular,
$
T_*(\psi_0)=+\infty,
$
and therefore the solution
$\psi$ is global.

\end{enumerate}
\end{theorem}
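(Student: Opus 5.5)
For part (i) I will run a contraction argument directly in $C([0,T];H^s(M^2))$, using only the unitary group $e^{-it\D}$ and no Strichartz estimates. Since $\D$ is self-adjoint and commutes with its powers, $e^{-it\D}$ is unitary on every $H^s$ defined via $\langle \D\rangle^s$, and on a compact manifold this norm is equivalent to the usual Sobolev norm. The whole point is then a nonlinear estimate
\[
\|(\mu^2-\Delta_g)^{-1}(|\psi|^2)\,\psi\|_{H^s}\lesssim \|\psi\|_{H^s}^3,\qquad s\ge \tfrac12,
\]
together with its trilinear difference version. I will prove it as follows. By Sobolev embedding in dimension $2$, $H^{1/2}\subset L^4$, so $|\psi|^2\in L^2$. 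Elliptic regularity then gives $V_\psi=(\mu^2-\Delta_g)^{-1}|\psi|^2\in H^2\subset L^\infty\cap W^{1,p}$ for all $p<\infty$. To bound $V_\psi\psi$ in $H^s$, I use the fractional Leibniz rule on $M$ (via partition of unity and charts): $\|V\psi\|_{H^s}\lesssim \|V\|_{L^\infty}\|\psi\|_{H^s}+\|V\|_{W^{s,p}}\|\psi\|_{L^q}$. For $s\le 2$ the needed norms of $V$ are controlled by $\|V\|_{H^2}\lesssim\||\psi|^2\|_{L^2}$. For larger $s$, $\|V\|_{H^{s+2}}\lesssim \||\psi|^2\|_{H^s}$, which is an algebra-type bound. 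Duhamel's formula plus these bounds gives a contraction on a ball, with $T\sim \|\psi_0\|_{H^s}^{-2}$. Uniqueness, continuous dependence and the blow-up alternative then follow in the standard way, since $T$ depends only on the $H^s$ norm.

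For part (ii) I first record the conservation law $\|\psi(t)\|_{L^2}=\|\psi_0\|_{L^2}$. It holds because $\D$ is self-adjoint and $V_\psi$ is real-valued, so $\mathrm{Im}\langle V_\psi\psi,\psi\rangle=0$. I will justify it for $H^1$ solutions and then, by persistence of regularity, use it for all $k\ge1$.

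The key step is the energy identity for $E_k(t)=\|\D^k\psi\|_{L^2}^2$. Because $\D$ is invertible (massive), $E_k$ is equivalent to $\|\psi\|_{H^k}^2$. Differentiating and using self-adjointness of $\D$ gives
\[
\tfrac{d}{dt}E_k=2\,\mathrm{Im}\langle \D^k(V_\psi\psi),\D^k\psi\rangle=2\,\mathrm{Im}\langle [\D^k,V_\psi]\psi,\D^k\psi\rangle ,
\]
where the term $V_\psi\D^k\psi$ drops out since $V_\psi$ is real. The commutator $[\D^k,V]$ is a differential operator of order $k-1$ whose coefficients involve derivatives of $V$ up to order $k$. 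This yields
\[
\|[\D^k,V]\psi\|_{L^2}\lesssim \sum_{j=1}^{k}\|\nabla^jV\,\nabla^{k-j}\psi\|_{L^2}.
\]

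The main obstacle is closing this bound linearly in $E_k$, with a constant depending only on $\|\psi_0\|_{L^2}$ and lower-order quantities, so that Grönwall gives exponential rather than superlinear growth. I will first handle $k=1$. There $\|\nabla V\|_{L^\infty}$ is not controlled by $L^2$ alone, so instead I bound $\|\nabla V\,\psi\|_{L^2}\le \|\nabla V\|_{L^4}\|\psi\|_{L^4}$. Elliptic regularity in $L^p$ gives $\|\nabla V\|_{L^4}\lesssim \||\psi|^2\|_{L^{4/3}}=\|\psi\|_{L^{8/3}}^2$. Gagliardo–Nirenberg then gives $\|\psi\|_{L^4}\lesssim\|\psi\|_{L^2}^{1/2}\|\psi\|_{H^1}^{1/2}$ and $\|\psi\|_{L^{8/3}}^2\lesssim \|\psi\|_{L^2}^{3/2}\|\psi\|_{H^1}^{1/2}$. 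Altogether this is $\lesssim \|\psi_0\|_{L^2}^2\,\|\psi\|_{H^1}$, which is exactly linear. Hence $\dot E_1\le C(\|\psi_0\|_{L^2})E_1$, and Grönwall gives the bound for $k=1$.

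For $k\ge2$ I proceed by induction. I will use $\|V\|_{W^{j,p}}\lesssim\||\psi|^2\|_{W^{j-2,p}}$ together with Gagliardo–Nirenberg interpolation, aiming for terms of the form $\|\nabla^jV\,\nabla^{k-j}\psi\|_{L^2}\lesssim C(\|\psi\|_{H^{k-1}})\|\psi\|_{H^k}$. The top term $j=k$ uses $\nabla^kV\in L^2$ bounded by $\||\psi|^2\|_{H^{k-2}}$ times $\|\psi\|_{L^\infty}\lesssim\|\psi\|_{H^{k-1}}$. Since the induction gives $\|\psi\|_{H^{k-1}}\le Ce^{Ct}$, this alone only yields $\dot E_k\le Ce^{Ct}E_k$, which integrates to a double exponential. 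To obtain a single exponential I will instead interpolate the intermediate factors between $L^2$ and $H^k$, so that the total power of $\|\psi\|_{H^k}$ is exactly $2$ and all remaining factors are $L^2$ norms, which are conserved. Because $V$ smooths by two derivatives, I expect this balance to work in 2D. This gives $\dot E_k\le C E_k$ and hence $\|\psi(t)\|_{H^k}\le C_{k,0}e^{C_{k,1}t}$. Global existence then follows from the blow-up alternative in (i).
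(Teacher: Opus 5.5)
Your part (i) and your case $k=1$ follow the paper's argument. The paper runs a contraction in $L^\infty_TH^s$ via $H^{1/2}\hookrightarrow L^4$ and $V_\psi\in H^2$, and then uses $\|dV_\psi\|_{L^4}\lesssim\|\psi\|_{L^{8/3}}^2$ plus Gagliardo--Nirenberg to get $|E_1'|\lesssim M_0^2\|\psi\|_{H^1}E_1^{1/2}$.

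For $k\ge2$ your route is genuinely different. The paper inducts on $k$ and proves a commutator bound containing only $\|\psi\|_{H^k}^{1/2}$, with coefficients that are powers of $\|\psi\|_{H^{k-1}}$. Inserting the inductive bound gives $|E_k'|\le Ce^{Ct}(E_k^{1/2}+E_k^{3/4})$, and Young's inequality turns this into $E_k'\le E_k+Ce^{Ct}$. The exponentially growing lower norms thus act as a forcing term rather than as a coefficient of $E_k$, which is how the paper avoids the double exponential you correctly identify.

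You instead interpolate every factor between the conserved $L^2$ norm and $H^k$. You only say you ``expect'' this to balance. It does, but it is the heart of the argument and must be written out. Use $\|\nabla^a\psi\|_{L^r}\lesssim\|\psi\|_{H^{a+1-2/r}}\le\|\psi\|_{L^2}^{1-\theta}\|\psi\|_{H^k}^{\theta}$, with $\theta=(a+1-2/r)/k$ and $2\le r<\infty$.
\begin{itemize}
\item The $j=1$ term is bounded by $\|\nabla V_\psi\|_{L^4}\|\nabla^{k-1}\psi\|_{L^4}$, with $\|\nabla V_\psi\|_{L^4}\lesssim\|\psi\|_{L^{8/3}}^2$. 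The interpolation exponents are $\frac{1}{4k},\frac{1}{4k},\frac{2k-1}{2k}$.
\item The terms with $j\ge2$ are bounded by $\|\nabla^jV_\psi\|_{L^3}\|\nabla^{k-j}\psi\|_{L^6}$, with $\|\nabla^jV_\psi\|_{L^3}\lesssim\sum_{a+b\le j-2}\|\nabla^a\psi\|_{L^6}\|\nabla^b\psi\|_{L^6}$. The exponents are $\frac{a+2/3}{k},\frac{b+2/3}{k},\frac{k-j+2/3}{k}$.
\end{itemize}
In the top-order pieces the exponents sum to exactly $1$, as two-dimensional scaling dictates. No Lebesgue exponent is infinite, so the failing endpoint $H^1\not\subset L^\infty$ is never used. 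The lower-order pieces, including those from the connection coefficients, have exponent sum below $1$ and are absorbed using $M_0\le\|\psi\|_{H^k}$.

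Hence $\|[\D^k,V_\psi]\psi\|_{L^2}\le C_kM_0^2\|\psi\|_{H^k}$, and $|E_k'|\le C_kM_0^2(M_0^2+E_k)$ for every $k$ at once, with no induction. Your route therefore gives the cleaner bound $\|\psi(t)\|_{H^k}\lesssim_k\|\psi_0\|_{H^k}e^{C_kM_0^2t}$. It is linear in the data, and its rate does not compound from one level to the next as it does through the paper's Young step. The paper's route uses only coarse embeddings, and its inductive scheme carries over to $3$d. There, interpolating off $L^2$ is scaling-supercritical (the exponents would sum to $(k+1)/k$), so the conserved $H^{1/2}$ bound has to serve as the base instead.

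Two small points. Invertibility of $\D$ requires $m>0$, since for $m=0$ harmonic spinors may exist; in general work with $\|\psi\|_{H^k}\lesssim M_0+E_k^{1/2}$, as the paper does. Also, your preliminary top-term bound through $\|\psi\|_{L^\infty}\lesssim\|\psi\|_{H^{k-1}}$ is false for $k=2$ in dimension two, though you discard it anyway.
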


\begin{remark}\label{energyremark}
System \eqref{eq:NLD} has an associated conserved energy that is given by
\begin{equation}\label{energy}
\mathcal{E}(\psi)=\frac12\langle \D\psi,\psi\rangle -\frac14\|(\mu^2-\Delta_g)^{-\frac12}|\psi|^2\|_{L^2}^2
\end{equation}
As it is well known, the spectrum of the Dirac operator is unbounded both above and below, and therefore the energy defined by \eqref{energy} does not provide a positive quantity that can be exploited to extend local solutions by controlling some Sobolev norm. This fact represents the major difference (and difficulty) with respect to the analogous problem for the nonlinear Schr\"odinger equation. In our case, in $2$d, we will be able to extend the local solution by relying only on the $L^2$ conservation and on the smoothing properties of the nonlinear term $\mathcal{N}(\psi)$.
\end{remark}

\begin{remark}\label{rem:comparison-GS}
In the Euclidean setting, system \eqref{eq:NLD} was proved to be globally well-posed in
$H^s(\mathbb R^2)$ for every $s>0$ in \cite{geosha}, with a proof based on a
Brezis--Gallou\"et type inequality, and it is conceivable that a suitable adaptation of that strategy to the
compact setting could lower the regularity threshold in our local theory as well. We do not pursue this
issue here. Rather, our main goal is to obtain quantitative control on the long-time behavior of higher-order
Sobolev norms on compact manifolds. From this point of view, the threshold $s\ge \frac12$ should be
regarded as a natural regularity level compatible with the energy method developed in the present paper,
rather than as an optimal one. Moreover, while the Euclidean large-data theory in \cite{geosha} yields a
double-exponential control on the growth of the $H^{1/2}$ norm, our commutator-based higher-order energy
argument provides a single-exponential upper bound on the growth of the Sobolev norms $H^k$, for every
integer $k\ge 1$.
\end{remark}

$\bullet$ {\bf $3$d Result.}  We now consider the following system

\begin{equation}
\label{eq:NLD3}
\begin{cases}
i\partial_t \psi =\D\psi + \Pi_+\mathcal{N}(\psi), \quad \psi(t,x): \mathbb{R}\times M^3\rightarrow \C^4 \\
\psi(0,x) = \psi_0(x),\qquad\qquad\;\:\: \psi_0\in \Pi_+ H^s(M^3)
\end{cases}
\end{equation}
where $(M^3,g)$ a $3$-dimensional compact spin manifold without boundary, with $\mathcal{N}(\psi)$ given by \eqref{eq:nonlinearity} and where $\Pi_+$ denotes the projection of the operator $\D$ onto the positive spectrum, that is $
\Pi_+ := \mathbf 1_{(0,\infty)}(\D)
$. We postpone to forthcoming remark \ref{mainrk1} a discussion on the motivation for system \eqref{eq:NLD3}, both technical and theoretical, and to Lemma \ref{lem:positive-energy-3d} a brief overview of the relevant properties of the  operator $\Pi_+$.

Then we can prove the following
\begin{theorem}\label{thm:main-3d}
\begin{enumerate}
\item[(i)]
For every $s\ge \frac12$ and every $\psi_0\in \Pi_+H^s(M^3)$, there exist $T=T(\|\psi_0\|_{H^s})>0
$
and a unique solution
\[
\psi\in C([0,T];H^s(M^3))\cap L^4([0,T];L^4(M^3))
\]
to system \eqref{eq:NLD3}. Moreover, the usual blow-up alternative holds: if $T_*(\psi_0)<\infty$ denotes the maximal forward lifespan, then \[ \|\psi(t)\|_{H^s(M^3)}\to\infty \qquad\text{as }t\uparrow T_*(\psi_0). \] 
\item[(ii)] Let $k\geq 1$ be an integer and let $\psi_0\in \Pi_+H^k(M^3)$.
Let
\[
\psi\in C([0,T_*(\psi_0));H^k(M^3))
\]
be the maximal $H^k$-solution to \eqref{eq:NLD3} given by {\rm(i)}.
Then there exist constants $C_{k,0},C_{k,1}>0$, depending only on
$(M^3,g)$, $m$, $k$, $\mu$, $\|\psi_0\|_{L^2(M^3)}$, and $\|\psi_0\|_{H^k(M^3)}$, such that
\[
\|\psi(t)\|_{H^k(M^3)}
\leq
C_{k,0}e^{C_{k,1}t},
\qquad
t\in [0,T_*(\psi_0)).
\]
In particular,
$
T_*(\psi_0)=+\infty,
$
and therefore the solution $\psi$ is global.
\end{enumerate}
\end{theorem}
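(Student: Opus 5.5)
The plan has two parts: a contraction argument for the local theory (i), and an a priori energy estimate for (ii), closed by Gr\"onwall's lemma and the blow-up alternative.

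\emph{Local theory (i).} I will work with the Duhamel formulation
\[
\psi(t)=e^{-it\D}\psi_0 - i\int_0^t e^{-i(t-s)\D}\,\Pi_+\mathcal N(\psi(s))\,ds,
\]
in the space $X_T=C([0,T];H^s)\cap L^4([0,T];L^4)$. The group $e^{-it\D}$ commutes with $\Pi_+$, so the subspace $\Pi_+H^s$ is invariant under the flow. It is unitary on every $H^s$, since $H^s$ can be defined through $\langle\D\rangle^s$. For the nonlinearity I use that $(\mu^2-\Delta_g)^{-1}$ gains two derivatives. Together with Sobolev embedding in $3$d ($H^{1/2}\subset L^3$, $H^2\subset L^\infty$), this gives
\[
\|V_\psi\|_{W^{1,\infty}}\lesssim \|V_\psi\|_{H^{2+\epsilon'}}\lesssim\||\psi|^2\|_{H^{\epsilon'}},
\]
and for $s\ge\frac12$ a fractional Leibniz rule on $M$ yields $\|V_\psi\psi\|_{H^s}\lesssim\|\psi\|_{H^s}^3$, up to a harmless use of $L^4_tL^4_x$ at the endpoint. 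The $L^4L^4$ component is controlled through the local Strichartz estimate for the Dirac flow on compact manifolds \cite{cacdanmen}; with loss of derivatives it is still compatible with $s=\frac12$. The estimate is cubic and $\Pi_+$ is bounded on $H^s$, so the contraction closes for $T\sim\|\psi_0\|_{H^s}^{-2}$. Uniqueness, the blow-up alternative, and persistence of higher regularity (the $H^k$ lifespan equals the $H^s$ lifespan) then follow by the standard arguments.

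\emph{Higher-order energy estimate (ii).} Set $E_k(t)=\|\langle\D\rangle^k\psi(t)\|_{L^2}^2$, which is equivalent to $\|\psi\|_{H^k}^2$. The operator $\langle\D\rangle^k$ commutes with $\D$ and with $\Pi_+$, and $\Pi_+$ is an orthogonal projection. Hence
\[
\tfrac{d}{dt}E_k
=2\,\mathrm{Im}\langle \langle\D\rangle^k (V_\psi\psi),\langle\D\rangle^k\psi\rangle
=2\,\mathrm{Im}\langle [\langle\D\rangle^k, V_\psi]\psi,\langle\D\rangle^k\psi\rangle.
\]
The term $\langle V_\psi\langle\D\rangle^k\psi,\langle\D\rangle^k\psi\rangle$ drops out because $V_\psi$ is real. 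For $k$ even, $\langle\D\rangle^k$ is a differential operator; for odd $k$ I would use $\D^k$ plus $L^2$ instead. The commutator is a differential operator of order $k-1$ whose coefficients are derivatives of $V_\psi$. A Kato--Ponce type commutator estimate on $M$ then gives
\[
\|[\langle\D\rangle^k,V_\psi]\psi\|_{L^2}\lesssim \|\nabla V_\psi\|_{L^\infty}\|\psi\|_{H^{k-1}}+\|V_\psi\|_{H^k}\|\psi\|_{L^\infty}.
\]

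\emph{Closing the estimate; this is the main obstacle.} I need the coefficient in front of $E_k$ to be bounded in time, using only the conserved mass $\|\psi\|_{L^2}$; the energy in Remark \ref{energyremark} is indefinite and cannot be used. The $H^k$ conservation itself needs $\Pi_+$ to be orthogonal, and $L^2$ conservation follows in the same way.
\begin{itemize}
\item In $2$d, elliptic regularity, $H^2\subset L^\infty$ and Gagliardo--Nirenberg give
\[
\|\nabla V_\psi\|_{L^\infty}\lesssim\||\psi|^2\|_{L^{p}}\lesssim \|\psi\|_{L^2}^{2-\theta}\|\psi\|_{H^k}^{\theta}
\]
for some $p>2$ and small $\theta$. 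This is superlinear, so a naive Gr\"onwall bound fails. Instead I would bound the commutator term in the sharper form $\lesssim C(\|\psi\|_{L^2})\,E_k$ as follows. Place the top-order derivatives on $V_\psi$, so that $\|V_\psi\|_{H^k}\lesssim\||\psi|^2\|_{H^{k-2}}$. Then interpolate the intermediate terms between $L^2$ and $H^k$, with total exponent exactly $2$ in the $H^k$ norm.
\item In $3$d the analogous counting uses $\|\psi\|_{L^2}$ together with the positive-energy structure of Lemma \ref{lem:positive-energy-3d}. If it holds, the conserved energy $\langle\D\psi,\psi\rangle\ge m\|\psi\|^2_{L^2}$ becomes coercive on $\Pi_+H^{1/2}$. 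It then controls $\|\psi\|_{H^{1/2}}$ once the quartic term is bounded by $\|\psi\|_{L^2}^2\|\psi\|_{H^{1/2}}^{\,2-\delta}$.
\end{itemize}
With $\tfrac{d}{dt}E_k\le C E_k$ in hand, Gr\"onwall gives $\|\psi(t)\|_{H^k}\le C_{k,0}e^{C_{k,1}t}$. The blow-up alternative then forces $T_*=\infty$.
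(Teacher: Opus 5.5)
Part (i) is in substance the paper's argument. One slip: $H^{2+\epsilon'}\subset W^{1,\infty}$ is a two-dimensional embedding. In 3d the paper uses $\|V_\psi\|_{L^2_TH^2}\lesssim\|\psi\|_{L^4_TL^4}^2$ and the fact that $H^2$ multiplies $H^s$ for $0\le s\le 2$.

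The genuine gap is in (ii). The three-dimensional a priori bound is never derived, and the tool you propose for it fails. Your commutator bound $\|\nabla V_\psi\|_{L^\infty}\|\psi\|_{H^{k-1}}+\|V_\psi\|_{H^k}\|\psi\|_{L^\infty}$ cannot give $\frac{d}{dt}E_k\le CE_k$ with $C$ depending only on conserved quantities. The strongest conserved control in 3d is $\sup_t\|\psi(t)\|_{H^{1/2}}\le C_{1/2}$. A scaling count shows that interpolating the four factors of $\langle[\D^k,V_\psi]\psi,\D^k\psi\rangle$ between $H^{1/2}$ and $H^k$ gives total power \emph{exactly} $2$ in $\|\psi\|_{H^k}$; with $L^2$ in place of $H^{1/2}$ it is $2+\frac1k$. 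So there is no room for endpoint losses. $L^\infty$ bounds cost strictly more than $\tfrac32$ derivatives in 3d, which pushes the power strictly above $2$, and Gr\"onwall does not close. Already at $k=1$, $\|\nabla V_\psi\|_{L^\infty}$ needs $|\psi|^2\in L^{3+}$, i.e. $\psi\in H^{1+}$ quadratically, and $\|\psi\|_{L^\infty}$ is not controlled by $H^1$ at all. Your sentence ``in $3$d the analogous counting uses\dots'' is exactly where the proof has to happen, and it is missing.

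The paper closes the estimate with a sharp, non-endpoint H\"older pairing at the base level plus induction on $k$.
\begin{itemize}
\item For $k=1$ it uses $\|[\D,V_\psi]\psi\|_{L^2}\lesssim\|dV_\psi\|_{L^6}\|\psi\|_{L^3}$. Here $\|\psi\|_{L^3}\lesssim\|\psi\|_{H^{1/2}}\le C_{1/2}$ and $\|dV_\psi\|_{L^6}\lesssim\|V_\psi\|_{H^2}\lesssim\|\psi\|_{L^4}^2\lesssim\|\psi\|_{H^{1/2}}\|\psi\|_{H^1}$, so $|E_1'|\lesssim C(1+E_1)$.
\item For $k\ge2$ it does not look for a linear bound from conserved quantities; since the count is critical, that would need sharp non-endpoint multilinear estimates at every level. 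Instead it uses the exponential bound already proved at level $k-1$ together with $\|[\D^k,V_\psi]\psi\|_{L^2}\lesssim\|\psi\|_{H^{1/2}}\|\psi\|_{H^{k-1}}^{3/2}\|\psi\|_{H^k}^{1/2}+\|\psi\|_{H^{k-1}}^3$. This gives $|E_k'|\le Ce^{Ct}(E_k^{1/2}+E_k^{3/4})$, which is sublinear in $E_k$ and closes by Young and Gr\"onwall.
\end{itemize}
Once you induct, even your $L^\infty$ losses become harmless for $k\ge2$. The base case is where precision and the $H^{1/2}$ bound are indispensable.

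You have also misread the sign in the $H^{1/2}$ step. The system \eqref{eq:NLD3} is defocusing, and the conserved energy is $E_+=\frac12\langle\D\psi,\psi\rangle+\frac14\|(\mu^2-\Delta_g)^{-1/2}|\psi|^2\|_{L^2}^2$, whose quartic part is nonnegative. The spectral gap on $\mathrm{Ran}\,\Pi_+$ then gives $\|\psi\|_{H^{1/2}}^2\lesssim\langle\D\psi,\psi\rangle\le 2E_+(\psi_0)$ directly. You still need to check conservation of $E_+$, using that $\Pi_+$ is self-adjoint, commutes with $\D$, and that $\psi(t)\in\mathrm{Ran}\,\Pi_+$. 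The bound you ask for, $\|\psi\|_{L^2}^2\|\psi\|_{H^{1/2}}^{2-\delta}$, is not available: the quartic term is $L^2$-critical in 3d and under concentration scales exactly like $\|\psi\|_{L^2}^2\|\psi\|_{H^{1/2}}^2$.
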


\begin{remark}\label{mainrk1}
Differently from the $2d$ case, in dimension $d=3$ the conservation of the $L^2$ norm alone does not seem to be enough to obtain an exponential bound on the growth of the $H^1$ norm of the solution, which is the crucial tool in view of obtaining global solutions through standard continuation argument. On the other hand, a uniform bound on the $H^{1/2}$ norm of the solution provides a sufficient condition.
 This is the (technical) reason why we have inserted the spectral projection $\Pi_+$ and considered the defocusing case in system \eqref{eq:NLD3}: this way, we have a coercive conserved energy (see Lemma \ref{lem:positive-energy-3d}), and this allows to prove that the $H^{1/2}$ norm of the solution stays bounded. 
 In fact, this same point of view is considered in \cite{desmal}, in which a Gibbs measure is constructed for a model very similar to \eqref{eq:NLD3}.
 On the other hand, from a theoretical point of view, the projected equation \eqref{eq:NLD3} is naturally related to the so called ``no-pair" approximation from relativistic quantum mechanics. In that setting, one projects the Dirac dynamics onto the positive spectral subspace in
order to suppress the coupling with the negative continuum; this is the mechanism underlying the Brown-Ravenhall operator (see \cite{brorav}). Thus, the presence of $\Pi_+$ in \eqref{eq:NLD3} is also physically meaningful, and not only technically convenient.

\end{remark}

\section{Preliminaries}\label{sec:pre}

In this section we recall the geometric setting, the Sobolev spaces for
spinor fields, and the Strichartz estimate for the free Dirac flow. For further details, we refer to \cite{partom}, \cite{cacdes}, and \cite{cacdanmen}.
Throughout, \((M^d,g)\) denotes
a closed Riemannian spin manifold of dimension \(d\ge2\). We write
\(\mathbf g=dt^2-g\) for the product Lorentzian metric on
\(\mathbb R_t\times M^d\).

\subsection{Geometric setting and the Dirac operator}\label{subsec:diracdef}

The massive Dirac equation is given by
\begin{equation}
\label{eq:fmde}
i\partial_t \psi = \D \psi, \quad \psi:\R_t\times M^d \to \C^N,
\end{equation}
where $N=2^{\lfloor \frac{d+1}{2} \rfloor}$ and $\D$ is the Hamiltonian Dirac operator. Explicitly,
\begin{equation}
\D=-\gamma^0\left(i\gamma^a e_a{}^j\nabla_j + m \right),
\qquad a,j=1,\dots,d,
\end{equation}
with $m \geq 0$ denoting the mass parameter. Here $\gamma^0$ and $\{\gamma^a\}_{a=1}^d$ are complex matrices of dimension $N$ satisfying the Clifford algebra relations
\begin{gather*}
\gamma^a\gamma^b+\gamma^b\gamma^a
=
-2\delta^{ab}\,\mathbb{I},
\qquad a,b=1,\dots,d,
\\
\gamma^0\gamma^a+\gamma^a\gamma^0
=
0,
\qquad a=1,\dots,d,
\\
(\gamma^0)^2=\mathbb{I}.
\end{gather*}
Several different admissible choices are available for the $\gamma$ matrices (see section 3.1 in \cite{cacdanmen} for an explicit one);
all the results obtained throughout this paper are independent of this choice. The vielbein (or frame field) $e_a{}^j$ relates the standard Euclidean metric on $\R^d$ to the Riemannian metric $g$ through the relation
\begin{equation*}
    g^{j k}=e_a{}^j\,\delta^{ab}\,e_b{}^k.
\end{equation*}
The spin covariant derivative induced by the Levi-Civita connection is denoted by $\nabla_j$. In local coordinates,
\begin{equation*}
\nabla_j=\partial_j+\frac{1}{8}\,\omega_j{}^{ab}[\gamma^a,\gamma^b],\qquad a,b=1,\dots,d,
\end{equation*}
where
\begin{equation*}
\omega_j{}^{ab}
=
e^a{}_k
\left(
\partial_j e^{kb}
+
\Gamma^k_{jr}e^{rb}
\right)
\end{equation*}
are the spin connection coefficients, and $\Gamma^k_{jr}$ the Christoffel symbols of the Levi-Civita connection.

We now recall some basic properties of $\D$ that will be used later. First, $\D$ is an elliptic first-order differential operator that is formally self-adjoint with respect to the natural $L^2$ inner product on $M^d$:
\begin{equation*}
    \langle \D\psi,\varphi\rangle_{L^2}=\langle \psi,\D\varphi\rangle_{L^2}, \qquad \forall \,\psi,\varphi\in C^{\infty}\left(M^d;\C^N\right)
\end{equation*}
(throughout the paper, the Hermitian inner product is taken to be linear in the first variable and conjugate-linear in the second).
Second, the Leibniz rule holds:
\begin{equation}\label{leibrule}
\D(f\psi)=f\D\psi- i\,\gamma^0 c(df)\psi,
\qquad \forall f\in C^{\infty}(M^d), \ \forall \psi\in C^{\infty}\left(M^d;\C^N\right),
\end{equation}
where
\begin{equation*}
c(df)=\gamma^a e_a{}^j \partial_j f
\end{equation*}
denotes Clifford multiplication by the one-form $df$. Finally, $\D$ satisfies the Schr\"odinger--Lichnerowicz formula
\begin{equation}
    \D^2 = \nabla^*\nabla+\frac{1}{4}\mathcal{R}_g+m^2,
\end{equation}
where $\mathcal{R}_g$ denotes the scalar curvature of $(M^d,g)$.

\subsection{Sobolev spaces of spinor fields}
We now introduce the Sobolev spaces of spinor fields that will be used throughout the paper.

Let $k\in\mathbb{N}$ and $1\leq p<\infty$. We define $W^{k,p}\left(M^d;\C^N\right)$ as the completion of $C^{\infty}\left(M^d;\C^N\right)$ with respect to the norm
\begin{equation}
\|\psi\|_{W^{k,p}}
:=
\left(
\sum_{j=0}^{k}
\|\nabla^{j}\psi\|_{L^{p}}^{p}
\right)^{1/p}.
\end{equation}
In the case $p=\infty$, we define $W^{k,\infty}\left(M^d;\C^N\right)$ as the completion of $C^{\infty}\left(M^d;\C^N\right)$ with respect to the norm
\begin{equation}
\|\psi\|_{W^{k,\infty}}
:=
\sum_{j=0}^{k}
\|\nabla^{j}\psi\|_{L^\infty}.
\end{equation}
Here $\nabla^{j}\psi$ denotes the $j$-fold iterated spin covariant derivative of $\psi$.

In the Hilbert case $p=2$, we use the notation
\[
H^{k}\left(M^d;\C^N\right):=W^{k,2}\left(M^d;\C^N\right),
\]
and the corresponding Sobolev norm is equivalent to the one defined through powers of the Dirac operator $\D$. More precisely, there exist constants $C_1,C_2>0$ such that
\begin{equation*}
C_1 \|\psi\|_{H^k}
\le
\left(
\sum_{j=0}^{k}
\|\D^{j}\psi\|_{L^2}^{2}
\right)^{1/2}
\le
C_2 \|\psi\|_{H^k},
\qquad \forall\, \psi\in C^{\infty}\left(M^d;\C^N\right).
\end{equation*}
This equivalence follows from elliptic regularity for $\D$, combined with the Schrödinger--Lichnerowicz formula and the formal self-adjointness of $\D$.

In this paper we also make use of fractional Sobolev spaces $H^r\left(M^d;\C^N\right)$, with $r\geq 0$, defined via the spectral calculus of the Dirac operator. Since $\D$ is a self-adjoint elliptic first-order operator on a closed Riemannian manifold $(M^d,g)$, its spectrum is real and discrete. Moreover, there exists an orthonormal basis of $L^2\left(M^d;\C^N\right)$ consisting of eigenfunctions $(e_j)_{j\in\mathbb{N}}$ satisfying
\[
\D e_j = \lambda_j e_j, \qquad |\lambda_j|\to\infty.
\]
This spectral decomposition allows one to define, for $r\geq0$, the fractional Sobolev space
\begin{equation*}
H^r\left(M^d;\C^N\right)
:=
\left\{
\psi=\sum_j c_j e_j \in L^2\left(M^d;\C^N\right)\;:\;
\sum_j (1+\lambda_j^2)^r |c_j|^2 < \infty
\right\},
\end{equation*}
with associated norm
\begin{equation}
\|\psi\|_{H^r}
=
\left(
\sum_j (1+\lambda_j^2)^r |c_j|^2
\right)^{1/2}.
\end{equation}
For integer values $r\in\mathbb{N}$, this norm is equivalent to the Sobolev norm defined via iterated spin covariant derivatives, and we use the same notation for both constructions.

Finally, for \(r\geq 0\), and assuming sufficient regularity throughout, we define the \(H^{-r}\)-norm by
\begin{equation}
    \|\psi\|_{H^{-r}}
    :=
    \sup_{\substack{\varphi\in H^{r}\left(M^d;\,\C^N\right)\\ \|\varphi\|_{H^{r}}=1}}
    \left|
    \int_{M^d}
    \langle \psi,\varphi\rangle
    \, d\mathrm{vol}_g
    \right|,
\end{equation}
where $\langle \cdot,\cdot\rangle$ denotes the Hermitian inner product.
\begin{remark}
Standard Sobolev embedding and interpolation results
extend componentwise to $\C^N$-valued functions. Therefore, all classical Sobolev estimates on compact manifolds still hold in the present setting.
\end{remark}

To simplify the notation, from now on we shall omit the target space $\C^N$
and write $L^p(M^d)$, $W^{k,p}(M^d)$ and $H^r(M^d)$ in place of
$L^p\left(M^d;\C^N\right)$, $W^{k,p}\left(M^d;\C^N\right)$ and $H^r\left(M^d;\C^N\right)$, respectively.

\subsection{Strichartz estimates}
Finally, we recall Strichartz estimates with loss of derivatives for the free massive Dirac equation \eqref{eq:fmde}, established in \cite{cacdanmen}. Denoting by $e^{-it\D}$ the unitary propagator associated with \eqref{eq:fmde}, we begin with the notions of wave-admissible and Schr\"odinger-admissible pairs.
\begin{definition}[Wave-admissible pair]
A pair $(p,q)$ is said to be wave-admissible if
\begin{equation*}
    p\in [2,\infty], \qquad q\in [2,\infty), \qquad (p,q,d)\neq (2,\infty,3),
\end{equation*}
and
\begin{equation*}
    \frac{2}{p}+\frac{d-1}{q}\leq \frac{d-1}{2}.
\end{equation*}
\end{definition}

\begin{definition}[Schr\"odinger-admissible pair]
A pair $(p,q)$ is said to be Schr\"odinger-admissible if
\begin{equation*}
    p\in [2,\infty], \qquad q\in [2,\infty), \qquad (p,q,d)\neq (2,\infty,2),
\end{equation*}
and
\begin{equation*}
    \frac{2}{p}+\frac{d}{q}\leq \frac{d}{2}.
\end{equation*}
\end{definition}

For convenience, we introduce the quantities
\begin{equation*}
    \gamma_{p,q}^{\rm KG}
    :=(1+d)\Big(\frac{1}{2}-\frac{1}{q}\Big)-\frac{1}{p},
    \qquad
    \gamma_{p,q}^{\rm W}
    :=d\Big(\frac{1}{2}-\frac{1}{q}\Big)-\frac{1}{p}.
\end{equation*}

Then the following estimates hold (see Theorem 2 in \cite{cacdanmen}).
\begin{theorem}\label{th:Stri-Dirac}
Let $(M,g)$ be a closed Riemannian spin manifold of dimension $d\geq 2$, and let $I\subset \mathbb{R}$ be a bounded interval. Then, for every $m\geq 0$, the following estimates hold:
\begin{enumerate}
    \item For every wave-admissible pair $(p,q)$,
    \begin{equation}\label{eq:stri-wave}
        \|e^{-it\D}u_0\|_{L^p(I;L^q(M))}
        \leq C\|u_0\|_{H^{\gamma_{p,q}^{\rm W}}(M)}.
    \end{equation}

    \item For every Schr\"odinger-admissible pair $(p,q)$,
    \begin{equation}\label{eq:stri-schrodinger}
        \|e^{-it\D}u_0\|_{L^p(I;L^q(M))}
        \leq C\|u_0\|_{H^{\gamma_{p,q}^{\rm KG}+\frac{1}{2p}}(M)}.
    \end{equation}
\end{enumerate}
\end{theorem}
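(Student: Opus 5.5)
Since Theorem \ref{th:Stri-Dirac} is quoted from \cite{cacdanmen}, I only sketch how I would reprove it. The plan is to reduce the Dirac flow to scalar-like half-wave flows with scalar principal symbol and then follow the semiclassical parametrix strategy of \cite{burgertzv} and \cite{kap}.

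\emph{Reduction.} Write $e^{-it\D}=e^{-it|\D|}\Pi_++e^{it|\D|}\Pi_-$, with $\Pi_\pm=\mathbf 1_{(0,\infty)}(\pm\D)$; the kernel of $\D$ is finite dimensional and harmless. By the Schr\"odinger--Lichnerowicz formula, $|\D|=(\nabla^*\nabla+\tfrac14\mathcal R_g+m^2)^{1/2}$ is a first-order elliptic pseudodifferential operator with \emph{scalar} principal symbol $|\xi|_g\,\mathbb I$. The projections $\Pi_\pm=\tfrac12(1\pm\D|\D|^{-1})$ are zeroth-order pseudodifferential operators, modulo smoothing terms on the kernel. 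Hence they are bounded on $L^q$ for $1<q<\infty$ and on every $H^s$. It therefore suffices to prove both estimates for $e^{\mp it|\D|}$.

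\emph{Frequency localization.} Next I would take a Littlewood--Paley decomposition $\varphi(h|\D|)$, $h=2^{-j}$, built from the spectral calculus. By the square-function estimate on compact manifolds (valid since $q\geq2$, using Minkowski in $p,q\ge2$), the problem reduces to the dyadic bound
\[
\|e^{-it|\D|}\varphi(h|\D|)u_0\|_{L^p(I;L^q)}\lesssim h^{-\gamma}\|u_0\|_{L^2},
\]
where $\gamma$ is $\gamma^{\rm W}_{p,q}$ or $\gamma^{\rm KG}_{p,q}+\tfrac1{2p}$. For a fixed $h$, I would build a WKB parametrix for $e^{-it|\D|}\varphi(h|\D|)$ on a short time interval $|t|\le\alpha$ with $\alpha$ small and independent of $h$, working in coordinate charts. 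The phase solves the eikonal equation for $|\xi|_g$; because the principal symbol is scalar, the transport equations for the matrix-valued amplitudes are linear ODEs along the geodesic flow and are solved as in the scalar case. Stationary phase then gives the dispersive bound
\[
\|e^{-it|\D|}\varphi(h|\D|)\|_{L^1\to L^\infty}\lesssim h^{-d}\min\bigl(1,(h^{-1}|t|)^{-\frac{d-1}{2}}\bigr),\qquad |t|\le\alpha .
\]
The $TT^*$ argument and Keel--Tao then give the wave-admissible bound on $[0,\alpha]$ with loss $h^{-\gamma^{\rm W}_{p,q}}$. Covering the bounded interval $I$ by finitely many such intervals, using unitarity, proves \eqref{eq:stri-wave}.

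\emph{Schr\"odinger-admissible pairs.} For \eqref{eq:stri-schrodinger} I would exploit the positive mass curvature of the symbol $\sqrt{|\xi|^2_g+m^2}$ in the radial direction. On the dyadic shell its Hessian has one eigenvalue of size $h$ instead of $0$. This gives the full decay $|t|^{-d/2}$, at the price of an extra power $h^{-1/2}$, but only on time intervals of length $\sim h^{1/2}$ (the time scale of the Klein--Gordon regime). Keel--Tao on each short interval, followed by summation of the $\sim h^{-1/2}$ intervals in $L^p_t$, costs $h^{-1/(2p)}$ and produces the exponent $\gamma^{\rm KG}_{p,q}+\tfrac1{2p}$. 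If the mass degenerates ($m=0$), I would instead derive this bound from the wave estimate plus Sobolev embedding, and check that the exponent bookkeeping still gives the claimed loss.

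I expect the main obstacle to be the parametrix and dispersive estimate in the Schr\"odinger-admissible case. There one must check that the second-order (curvature) information of the phase survives the geometry uniformly in $h$ on the chosen time scale, and that the spinor connection terms enter only the lower-order amplitudes. The exponent bookkeeping in the interval summation must also be checked carefully to match $\gamma^{\rm KG}_{p,q}+\tfrac1{2p}$ exactly.
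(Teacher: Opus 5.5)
The paper does not prove this theorem; it imports it as Theorem 2 of \cite{cacdanmen}, so there is no in-paper argument to compare with. Your argument for part (1) is the standard semiclassical route and is fine: splitting $e^{-it\D}$ into $e^{\mp it|\D|}\Pi_\pm$, Littlewood--Paley in $|\D|$, a short-time WKB parametrix with matrix amplitudes transported along the geodesic flow, the dispersive bound $h^{-d}\min\bigl(1,(|t|/h)^{-\frac{d-1}{2}}\bigr)$, Keel--Tao, and finitely many time steps. \textbf{The gap is the mechanism you give for part (2).} After your reduction, the mass enters the semiclassical symbol of $h|\D|$ only at order $h^2$, through $\sqrt{|\eta|_g^2+h^2m^2}$. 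So on the shell $|\eta|\sim1$ its radial curvature is of size $h^2m^2$, not $h$. The radial eigenvalue of the Hessian of the phase $h^{-1}\bigl(x\cdot\eta-t\,p_h(\eta)\bigr)$ is then of size $|t|hm^2$. Radial stationary phase therefore gains nothing for $|t|\lesssim h^{-1}$, and in particular nothing on intervals of length $h^{1/2}$. Nothing in the problem singles out that time scale. Carried out as planned, the curvature argument would not produce $|t|^{-d/2}$ decay.

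The bound you actually need is nevertheless true, for a reason that has nothing to do with $m$. For $0<h\le1$ and $|t|\le\alpha\le1$,
\[
h^{-d}\min\bigl(1,(|t|/h)^{-\frac{d-1}{2}}\bigr)\le h^{-\frac{d+2}{2}}|t|^{-\frac d2},
\]
because the ratio of the right side to the left side is $(h|t|)^{-1/2}\ge1$ if $|t|\ge h$, and at least $h^{-1}$ if $|t|\le h$. So Keel--Tao on all of $[0,\alpha]$ already gives the loss $\frac{d+2}{2}(\frac12-\frac1q)=\gamma^{\rm KG}_{p,q}$ for sharp pairs. Your slicing into $h^{-1/2}$ subintervals is exactly what manufactures the extra $\frac1{2p}$.

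Simpler still, part (2) follows from part (1) by H\"older in time on the bounded interval $I$, for every $m\ge0$. If the Schr\"odinger-admissible pair $(p,q)$ is also wave-admissible, use $\gamma^{\rm W}_{p,q}\le\gamma^{\rm KG}_{p,q}$. Otherwise define $\tilde p$ by $\frac{2}{\tilde p}=(d-1)(\frac12-\frac1q)$. Then $\tilde p>p\ge2$, the pair $(\tilde p,q)$ is wave-admissible, and
\[
\gamma^{\rm W}_{\tilde p,q}=\frac{d+1}{2}\Bigl(\frac12-\frac1q\Bigr)\le\gamma^{\rm KG}_{p,q}\le\gamma^{\rm KG}_{p,q}+\frac1{2p}.
\]
The middle inequality is equivalent to $\frac1p\le\frac{d+1}{2}(\frac12-\frac1q)$, which follows from Schr\"odinger admissibility, $\frac1p\le\frac d2(\frac12-\frac1q)$.

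This is also the correct form of your $m=0$ fallback: the tool needed is H\"older in time, not Sobolev embedding. With $p$ fixed, wave-admissibility forces a strictly larger $q$. For $d=2$, $p=4$ or $d=3$, $p=2$, no such wave-admissible pair exists at all.
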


\begin{remark}
In the sequel we shall only use the wave-admissible estimate in dimension
\(3\), with \((p,q)=(4,4)\). In this case
\[
\gamma^{\rm W}_{4,4}
=
3\left(\frac12-\frac14\right)-\frac14
=
\frac12,
\]
and hence
\[
\|e^{-it\D}u_0\|_{L^4(I;L^4(M))}
\lesssim
\|u_0\|_{H^{1/2}(M)}.
\]
\end{remark}

\section{Proof of Theorem \ref{thm:main-2d}}

This section is devoted to the proof of Theorem \ref{thm:main-2d}. In order to simplify the notations, throughout this section we shall omit the dependence on the dimension on the manifold and denote by $M=M^2$.

\subsection{Local well-posedness}\label{2dLWP}
This proof is standard, but we include it for the sake of completeness. We write equation \eqref{eq:NLD} in Duhamel form
\[
\psi(t)=e^{-it\D}\psi_0+i\int_0^t e^{-i(t-\tau)\D}\mathcal N(\psi(\tau))\,d\tau,
\]
with, we recall, 
\[
\mathcal N(\psi)=V_\psi \psi= \bigl((\mu^2-\Delta_g)^{-1}|\psi|^2\bigr)\psi.
\]
We define the map
\[
\Phi(\psi)(t):=e^{-it\D}\psi_0+i\int_0^t e^{-i(t-\tau)\D} V_\psi \psi((\tau))\,d\tau
\]
and prove that it is a contraction on $L^\infty([0,T];H^s(M))$ for $s\geq1/2$.
Since $e^{-it\D}$ is unitary on $H^s(M)$, we have
\begin{equation}\label{linest}
\|e^{-it\D}\psi_0\|_{L^\infty([0,T];H^s(M))}=\|\psi_0\|_{H^s(M)}
\end{equation}

By Minkowski's inequality and the same linear estimates we then get
\begin{equation}\label{duamest}
\left\|\int_0^t e^{-i(t-\tau)\D}V_\psi \psi(\tau)\,d\tau\right\|_{L^\infty_tH^s_x}
\le \int_0^T \|V_\psi \psi(\tau)\|_{H^s}\,d\tau
= \|V_\psi \psi\|_{L^1([0,T];H^s)}.
\end{equation}

We are thus left with estimating the nonlinear term. We claim that, for every
\(s\ge 1/2\),
\begin{equation}\label{firstest}
\|V_\psi(t)\psi(t)\|_{H^s(M)}
\lesssim
\|\psi(t)\|_{H^s(M)}^3 .
\end{equation}
Indeed, if \(1/2\le s\le 2\), this is a simple consequence of  elliptic regularity and
Sobolev embedding: we have indeed
\[
\|V_\psi(t)\|_{H^2(M)}
\lesssim
\||\psi(t)|^2\|_{L^2(M)}
\lesssim
\|\psi(t)\|_{L^4(M)}^2
\lesssim
\|\psi(t)\|_{H^s(M)}^2
\]
and then, since on a two-dimensional compact manifold \(H^2(M)\) acts by
multiplication on \(H^s(M)\) for \(0\le s\le 2\), we obtain
\[
\|V_\psi(t)\psi(t)\|_{H^s(M)}
\lesssim
\|V_\psi(t)\|_{H^2(M)}
\|\psi(t)\|_{H^s(M)}
\lesssim
\|\psi(t)\|_{H^s(M)}^3 .
\]
If instead \(s>2\), then \(H^s(M)\) is an algebra and elliptic regularity gives
\[
\|V_\psi(t)\|_{H^s(M)}
\lesssim
\|V_\psi(t)\|_{H^{s+2}(M)}
\lesssim
\||\psi(t)|^2\|_{H^s(M)}
\lesssim
\|\psi(t)\|_{H^s(M)}^2.
\]
Therefore also in this case
\[
\|V_\psi(t)\psi(t)\|_{H^s(M)}
\lesssim
\|V_\psi(t)\|_{H^s(M)}
\|\psi(t)\|_{H^s(M)}
\lesssim
\|\psi(t)\|_{H^s(M)}^3.
\]
This proves \eqref{firstest}.

Integrating in time we finally get
\begin{equation}\label{eq:nonlinear-bound}
\|V_\psi\psi\|_{L^1([0,T];H^s(M))}
\lesssim T\,\|\psi\|_{L^\infty([0,T];H^s(M))}^3
\end{equation}
for any $s\geq 1/2$.

The corresponding Lipschitz estimate follows in the same way. Indeed,
\[
V_\psi\psi-V_\varphi\varphi
=
V_\psi(\psi-\varphi)+(V_\psi-V_\varphi)\varphi .
\]
The first term is estimated as above. For the second one, elliptic regularity gives
\[
\|V_\psi-V_\varphi\|_{H^2}
\lesssim
\||\psi|^2-|\varphi|^2\|_{L^2}
\lesssim
(\|\psi\|_{H^s}+\|\varphi\|_{H^s})
\|\psi-\varphi\|_{H^s},
\]
where we used \(H^s(M)\hookrightarrow L^4(M)\), \(s\ge 1/2\). Hence, by the same multiplier estimate as above,
\begin{equation}\label{lipest}
\|V_\psi\psi-V_\varphi\varphi\|_{H^s}
\lesssim
\bigl(\|\psi\|_{H^s}^2+\|\varphi\|_{H^s}^2\bigr)
\|\psi-\varphi\|_{H^s}.
\end{equation}
Consequently,
\[
\|V_\psi\psi-V_\varphi\varphi\|_{L^1([0,T];H^s(M))}
\lesssim
T\bigl(
\|\psi\|_{L^\infty_T H^s}^2+
\|\varphi\|_{L^\infty_T H^s}^2
\bigr)
\|\psi-\varphi\|_{L^\infty_T H^s}.
\]

Now we show that the solution map is a contraction.
Let
\[
R:=2\|\psi_0\|_{H^s},
\]
 and consider the ball
\[
B_R:=\{\psi\in L^\infty([0,T];H^s(M)):\ \|\psi\|_{L^\infty([0,T];H^s(M))}\le R\}.
\]
By \eqref{linest}, \eqref{duamest}, and \eqref{eq:nonlinear-bound},
\[
\|\Phi(\psi)\|_{L^\infty([0,T];H^s(M))}
\le \|\psi_0\|_{H^s}+C_T T\|\psi\|_{L^\infty([0,T];H^s(M))}^3
\le \frac{R}{2}+C_TTR^3.
\]
Hence, if $T>0$ is chosen so that
\[
C_TTR^2\le \frac14,
\]
then $\Phi(B_R)\subset B_R$.

Likewise, by \eqref{lipest}, we get
\begin{eqnarray*}
\|\Phi(\psi)-\Phi(\varphi)\|_{L^\infty([0,T];H^s(M))}
&\le& C_T T\bigl(\|\psi\|_{L^\infty([0,T];H^s(M))}^2+\|\varphi\|_{L^\infty([0,T];H^s(M))}^2\bigr)\|\psi-\varphi\|_{L^\infty([0,T];H^s(M))}
\\
&\le& 2C_TTR^2\|\psi-\varphi\|_{L^\infty([0,T];H^s(M))}.
\end{eqnarray*}
Shrinking $T$ further if necessary so that
\[
2C_TTR^2\le \frac12,
\]
we conclude that $\Phi$ is a contraction on $B_R$.

Therefore $\Phi$ has a unique fixed point in $B_R$, which yields a unique solution $
\psi\in L^\infty([0,T];H^s(M)$. 

Since \(\mathcal N(\psi)\in L^1([0,T];H^s(M))\), the Duhamel formula and the
strong continuity of \(e^{-it\mathcal D}\) on \(H^s(M)\) imply
$
\psi\in C([0,T];H^s(M)).
$
The same contraction estimates give local Lipschitz dependence on the initial datum
on bounded subsets of \(H^s(M)\), and persistence of higher regularity follows by
applying the argument at the higher Sobolev level. The existence of a maximal forward lifespan
$
T_*(\psi_0)\in(0,+\infty]
$
and the blow-up alternative are standard consequences of the local theory. Indeed, uniqueness allows one to glue local solutions and thus obtain a unique maximal solution
$
\psi\in C([0,T_*(\psi_0));H^s(M)).
$
Moreover, if $T_*(\psi_0)<\infty$ and
$\sup_{t\in[0,T_*(\psi_0))}\|\psi(t)\|_{H^s(M)}<\infty,$
then, by the local well-posedness result, one can restart the Cauchy problem at some time $t_0<T_*(\psi_0)$ and extend the solution beyond $T_*(\psi_0)$, a contradiction. Therefore,
\[
T_*(\psi_0)<\infty
\qquad\Longrightarrow\qquad
\|\psi(t)\|_{H^s(M)}\to\infty
\quad\text{as }t\uparrow T_*(\psi_0).
\]

\subsection{Exponential growth of Sobolev norms and global well-posedness}

We prove the a priori estimates for smooth solutions. The general
case follows by density, using the continuous dependence statement in
the local theory.
First of all, we denote by
\[
M_0:=\|\psi_0\|_{L^2(M)}=\|\psi(t)\|_{L^2(M)}
\]
the conserved charge.
We then record an energy identity which will be used at every order.
For \(k\geq1\), set
\[
E_k(t):=\|\D^k\psi(t)\|_{L^2(M)}^2.
\]
Notice that since $\D^k$ is an elliptic differential operator of order $k$ on the compact manifold $M$, for any $k\geq 1$ there exists
$C_{E,k}>0$ such that
\begin{equation}\label{eq:elliptic_hm}
\|\psi(t)\|_{H^k(M)}
\leq
C_{E,k}\Bigl(\|\psi(t)\|_{L^2(M)}+\|\D^k\psi(t)\|_{L^2(M)}\Bigr)
=
C_{E,k}\bigl(M_0+E_k(t)^{1/2}\bigr).
\end{equation}
Since
\[
\partial_t\psi=-i\D\psi+iV_\psi\psi,
\]
we have
\[
\frac12 E_k'(t)
=
\Re\langle \D^k\partial_t\psi,\D^k\psi\rangle
=
\Re\langle -i\D^{k+1}\psi+i\D^k(V_\psi\psi),\D^k\psi\rangle.
\]
The linear term vanishes by self-adjointness of \(\D\). Moreover,
\[
\D^k(V_\psi\psi)
=
V_\psi\D^k\psi+[\D^k,V_\psi]\psi,
\]
and the contribution of \(V_\psi\D^k\psi\) is purely real inside the
Hermitian product, since \(V_\psi\) is real-valued. Hence
\[
\frac12 E_k'(t)
=
-\Im\langle [\D^k,V_\psi]\psi,\D^k\psi\rangle.
\]
Consequently,
\begin{equation}\label{eq:Ek-basic}
|E_k'(t)|
\lesssim
\|[\D^k,V_\psi]\psi\|_{L^2(M)}\,E_k(t)^{1/2}.
\end{equation}
We now proceed by induction on $k$.
\medskip

    $\bullet$ {\em Case $k=1$.}
Since
\[
[\D,V_\psi]\psi=-i\gamma^0c(dV_\psi)\psi,
\]
we obtain
\[
\|[\D,V_\psi]\psi\|_{L^2}
\lesssim
\|\nabla V_\psi\|_{L^4}\|\psi\|_{L^4}.
\]
Since $d(\mu^2-\Delta_g)^{-1}$ is a pseudodifferential operator of order $-1$, we get thanks to Sobolev embedding:
\[
dV_\psi:H^{-1/2}(M)\longrightarrow H^{1/2}(M)\hookrightarrow L^4(M)
\]
so that
\[
\|dV_\psi(t)\|_{L^4(M)}
\lesssim
\||\psi(t)|^2\|_{H^{-1/2}(M)}.
\]
By the dual Sobolev embedding \(L^{4/3}(M)\hookrightarrow H^{-1/2}(M)\) and interpolation between \(L^2(M)\) and \(L^4(M)\) we get
\[
\||\psi(t)|^2\|_{H^{-1/2}(M)}
\lesssim
\||\psi(t)|^2\|_{L^{4/3}(M)}
=
\|\psi(t)\|_{L^{8/3}(M)}^2\lesssim
\|\psi(t)\|_{L^2(M)}\|\psi(t)\|_{L^4(M)}
=
M_0\|\psi(t)\|_{L^4(M)}.
\]
Hence
\begin{equation}\label{eq:dV-L4}
\|dV_\psi(t)\|_{L^4(M)}
\lesssim
M_0\,\|\psi(t)\|_{L^4(M)}.
\end{equation}
Plugging \eqref{eq:dV-L4} into \eqref{eq:Ek-basic} for $k=1$, we obtain
\begin{equation}\label{enest1}
|E_1'(t)|
\lesssim
M_0\,\|\psi(t)\|_{L^4(M)}^2\,E_1(t)^{1/2}.
\end{equation}
Now the $2d$ Gagliardo--Nirenberg inequality yields
\begin{equation}\label{L4est}
\|\psi(t)\|_{L^4(M)}^2
\lesssim
\|\psi(t)\|_{L^2(M)}\,\|\psi(t)\|_{H^1(M)}
=
M_0\,\|\psi(t)\|_{H^1(M)}.
\end{equation}
Therefore \eqref{enest1} becomes, after using also \eqref{eq:elliptic_hm} and Young's inequality,
\[
|E_1'(t)|
\lesssim
M_0^2\,\|\psi(t)\|_{H^1(M)}\,E_1(t)^{1/2}\lesssim
M_0^2\bigl(M_0+E_1(t)^{1/2}\bigr)E_1(t)^{1/2}\leq C(1+E_1(t))
\]
with some constant $C=C(M,g,m,\mu,M_0)>0$.
Gr\"onwall's lemma then yields
\[
 E_1(t)+1\le (E_1(0)+1)e^{Ct},
\qquad t\in[0,T_*).
\]
Combining this with \eqref{eq:elliptic_hm}, we infer
\[
\|\psi(t)\|_{H^1(M)}
\le
C_{E}\bigl(M_0+E_1(t)^{1/2}\bigr)\le 
C(E_1(0)^{1/2}+1)e^{Ct}
\le
C(1+\|\psi_0\|_{H^1}) e^{C t}
\]
for a suitable constant $C>0$ depending only on $M$, $g$, $m$, $\mu$, and $M_0$.
\medskip

$\bullet${\em Case $k\geq 2$.}
We assume that the conclusion already holds at level $k-1$, namely that
\begin{equation}\label{indass}
\|\psi(t)\|_{H^{k-1}(M)}\leq C_{k-1,0}e^{C_{k-1,1}t}
\qquad
\text{for all } t\in [0,T_*(\psi_0)).
\end{equation}
We prove the corresponding estimate at level $k$. Now, in view of estimating \eqref{eq:Ek-basic} for $k\geq 2$ we need the following lemma.

\begin{lemma}\label{lem:commutator-higher-order-2d}
Let $k\geq2$ be an integer.
There exists \(C_k=C_k(M,g,m,\mu,k,\|\psi\|_{L^2})\) such that
\begin{equation}\label{eq:commutator-higher-order-2d}
\|[\D^k,V_\psi]\psi\|_{L^2(M)}
\leq
C_k\Big(
\bigl(\|\psi\|_{H^{k-1}(M)}+\|\psi\|_{H^{k-1}(M)}^{3/2}\bigr)\|\psi\|_{H^k(M)}^{1/2}
+\|\psi\|_{H^{k-1}(M)}^3
\Big).
\end{equation}
\end{lemma}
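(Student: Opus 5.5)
The plan is to expand the commutator with the Leibniz rule \eqref{leibrule} and then estimate each resulting bilinear term. The potential $V_\psi$ always gains two derivatives by elliptic regularity, and $\psi$ is measured in $L^4$ or $L^\infty$ through $2$d Sobolev embedding and interpolation between $H^{k-1}$ and $H^k$.

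\emph{Step 1: expanding the commutator.} Writing $\D^k = \D^{k-1}\D$ and iterating $[\D,V_\psi]=-i\gamma^0c(dV_\psi)$, I would express
\[
[\D^k,V_\psi]\psi=\sum_{j=1}^{k} A_{j}\bigl(\nabla^{j}V_\psi\bigr)\,\nabla^{\le k-j}\psi .
\]
Here the $A_j$ are bounded smooth bundle maps, built from $\gamma$-matrices, the vielbein and curvature terms. The factor $\nabla^{\le k-j}\psi$ collects covariant derivatives of $\psi$ of order at most $k-j$. Lower-order geometric contributions are absorbed into the same sum. It therefore suffices to bound $\|\nabla^jV_\psi\,\nabla^{k-j}\psi\|_{L^2}$ for $1\le j\le k$.

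\emph{Step 2: the three types of term.}
\begin{enumerate}
\item[(a)] $j=1$. I estimate $\|\nabla V_\psi\|_{L^\infty}\|\psi\|_{H^{k-1}}$. Elliptic regularity and Sobolev embedding give $\|\nabla V_\psi\|_{L^\infty}\lesssim\|V_\psi\|_{W^{2,4}}\lesssim\||\psi|^2\|_{L^4}=\|\psi\|_{L^8}^2\lesssim\|\psi\|_{H^{1}}^2$, using $H^{3/4}\hookrightarrow L^8$ in $2$d. Since $k\ge2$, this yields the cubic term $\|\psi\|_{H^{k-1}}^3$.
\item[(b)] $2\le j\le k-1$. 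I use Hölder with $L^4\times L^4$. On the $\psi$ side, $\|\nabla^{k-j}\psi\|_{L^4}\lesssim\|\psi\|_{H^{k-j+1/2}}$, which is at most $\|\psi\|_{H^{k-1}}$ when $j\ge2$. On the potential side, $\|\nabla^jV_\psi\|_{L^4}\lesssim\|V_\psi\|_{H^{j+1/2}}\lesssim\||\psi|^2\|_{H^{j-3/2}}$, and a fractional Leibniz estimate controls this by $\|\psi\|_{H^{k-1}}^2$. These terms are again cubic in $\|\psi\|_{H^{k-1}}$.
\item[(c)] $j=k$. Here the top derivative falls on $V_\psi$. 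I estimate it by $\|\nabla^kV_\psi\|_{L^2}\|\psi\|_{L^\infty}$ and use $\|\nabla^kV_\psi\|_{L^2}\lesssim\||\psi|^2\|_{H^{k-2}}$.
\begin{itemize}
\item For $k=2$, $\||\psi|^2\|_{L^2}=\|\psi\|_{L^4}^2\lesssim M_0\|\psi\|_{H^1}$ by \eqref{L4est}.
\item For $k\ge3$, the product estimate gives $\||\psi|^2\|_{H^{k-2}}\lesssim\|\psi\|_{L^\infty}\|\psi\|_{H^{k-2}}$, or the analogous bound with $L^4$ norms. In either case the result is at most $\|\psi\|_{H^{k-1}}^{2}$.
\end{itemize}
The remaining factor $\|\psi\|_{L^\infty}$ cannot be bounded by $\|\psi\|_{H^1}$ in $2$d. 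I would instead interpolate: $\|\psi\|_{L^\infty}\lesssim\|\psi\|_{H^{k-1/2}}\le\|\psi\|_{H^{k-1}}^{1/2}\|\psi\|_{H^k}^{1/2}$ (for $k=2$ this is $H^{3/2}\hookrightarrow L^\infty$). This produces the $\|\psi\|_{H^k}^{1/2}$ factor in \eqref{eq:commutator-higher-order-2d}.
\end{enumerate}

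\emph{Step 3: matching the exact powers.} The main obstacle is getting precisely the prefactor $\|\psi\|_{H^{k-1}}+\|\psi\|_{H^{k-1}}^{3/2}$ in front of $\|\psi\|_{H^k}^{1/2}$, rather than a higher power. Crude bounds in (c) give $\|\psi\|_{H^{k-1}}^{5/2}\|\psi\|_{H^k}^{1/2}$.
\begin{itemize}
\item To lower the power I would exploit the conserved charge, as in the case $k=1$. In (c), I would bound $\||\psi|^2\|_{H^{k-2}}$ using $\|\psi\|_{L^2}$ together with $2$d Gagliardo--Nirenberg, e.g. $\|\psi\|_{L^4}^2\lesssim M_0\|\psi\|_{H^1}$. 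This places one factor in $L^2$, so that only $\|\psi\|_{H^{k-1}}^{1}$ or $\|\psi\|_{H^{k-1}}^{3/2}$ survives, with the $M_0$ dependence absorbed into $C_k$.
\item Whenever (a) or (b) produce a term with half powers, I would use Young's inequality to split it as $\|\psi\|_{H^{k-1}}^{3/2}\|\psi\|_{H^k}^{1/2}$ plus $\|\psi\|_{H^{k-1}}^3$.
\end{itemize}
This bookkeeping is where I expect most of the effort. Its purpose is to ensure that in \eqref{eq:Ek-basic} the power of $E_k$ is at most $1$ after inserting the inductive bound \eqref{indass}, so that Gr\"onwall's lemma still yields exponential growth.
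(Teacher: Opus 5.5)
Your proof is correct. It has the same skeleton as the paper: a Leibniz expansion into terms $\nabla^\ell V_\psi\cdot\nabla^{\le k-\ell}\psi$, the elliptic gain of two derivatives on $V_\psi$, and $2$d Sobolev embeddings. What differs is how you allocate the H\"older exponents.

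The paper bounds the $\ell=1$ term by $\|\nabla V_\psi\|_{L^4}\|\psi\|_{W^{k-1,4}}$ and every $\ell\ge 2$ term by $\|\nabla^\ell V_\psi\|_{L^2}\|\psi\|_{W^{k-\ell,\infty}}$, using only $L^2$-based elliptic regularity. Since $W^{k-1,4}$ and $W^{k-2,\infty}$ both cost $H^{k-1/2}$, the factor $\|\psi\|_{H^k}^{1/2}$ enters through $\ell=1$ and $\ell=2$. The charge is then spent on the potential side, via $\|\nabla V_\psi\|_{L^4}\lesssim M_0\|\psi\|_{L^4}$ and $\|\nabla^2 V_\psi\|_{L^2}\lesssim M_0\|\psi\|_{H^1}$, to produce exactly the prefactors $\|\psi\|_{H^{k-1}}$ and $\|\psi\|_{H^{k-1}}^{3/2}$.

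You instead use $W^{2,4}$ elliptic regularity to put $\nabla V_\psi$ in $L^\infty$, so the top-order factor $\nabla^{k-1}\psi$ only needs $L^2$, and you use $L^4\times L^4$ for $2\le\ell\le k-1$. All of this is cubic in $\|\psi\|_{H^{k-1}}$, so $\|\psi\|_{H^k}$ can only enter at $\ell=k$.

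Your Step 3, however, is a detour you created yourself. For $k\ge 3$ the embedding $H^{k-1}\hookrightarrow L^\infty$ also handles the leftover $\|\psi\|_{L^\infty}$; you already used it one line earlier to get $\||\psi|^2\|_{H^{k-2}}\lesssim\|\psi\|_{H^{k-1}}^2$. So term (c) is simply $\lesssim\|\psi\|_{H^{k-1}}^3$. Interpolating toward $H^k$ there is what produced the spurious $\|\psi\|_{H^{k-1}}^{5/2}\|\psi\|_{H^k}^{1/2}$.

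Only for $k=2$ is the $L^\infty$ factor critical, and there your bound $M_0\|\psi\|_{H^1}^{3/2}\|\psi\|_{H^2}^{1/2}$ already has the required form. The charge-based repair you sketch for $k\ge3$ can be made to work, but you only justified it for $k=2$, and it is unnecessary.

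Drop the Young's-inequality bullet. It cannot lower the power in front of $\|\psi\|_{H^k}^{1/2}$: for instance $A^{5/2}B^{1/2}\le C(A^{3/2}B^{1/2}+A^3)$ fails for $B=A^3$ as $A\to\infty$. It is also never needed, since (a) and (b) produce no half powers.

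With this simplification your route gives a stronger bound, $\|[\D^k,V_\psi]\psi\|_{L^2}\lesssim\|\psi\|_{H^{k-1}}^3$ for $k\ge3$. Putting $\nabla^2V_\psi$ and $\psi$ both in $L^4$ gives the same cubic bound even for $k=2$. This turns the subsequent Gr\"onwall step into a direct integration of $\frac{d}{dt}E_k^{1/2}$. The paper's route, by contrast, avoids $L^p$ elliptic theory and lands directly on the stated form.
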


\begin{proof}
We prove the estimate for smooth spinors; the general case follows by
density. 
The general commutator identity and the Leibniz rule for the Dirac operator \eqref{leibrule} allow for the following expansion:
\begin{equation}\label{gencomm}
[\D^k,V_\psi]\psi
=
\sum_{r=0}^{k-1}\D^r[\D,V_\psi]\D^{k-1-r}\psi=
-i\sum_{r=0}^{k-1}
\D^r\left(\gamma^0c(dV_\psi)\D^{k-1-r}\psi\right).
\end{equation}
Notice that expanding the right-hand side above gives a finite sum of
terms in which at least one derivative falls on \(V_\psi\). More
precisely, the terms are all of the schematic form
\[
B_{\ell,j}\bigl(\nabla^\ell V_\psi,\nabla^j\psi\bigr),
\qquad
1\leq \ell\leq k,\quad 0\leq j\leq k-\ell,
\]
where the \(B_{\ell,j}\)'s are contractions with Clifford matrices,
\(\gamma^0\), and smooth bounded geometric coefficients.
Therefore, by Hölder's inequality in dimension two, we get the following
\begin{equation}\label{eq:comm-2d-start}
\|[\D^k,V_\psi]\psi\|_{L^2(M)}
\lesssim
\|\nabla V_\psi\|_{L^4(M)}
\|\psi\|_{W^{k-1,4}(M)}
+
\sum_{\ell=2}^{k}
\|\nabla^\ell V_\psi\|_{L^2(M)}
\|\psi\|_{W^{k-\ell,\infty}(M)}.
\end{equation}

We now use the following standard estimates. Since
\(d(\mu^2-\Delta_g)^{-1}\) has order \(-1\), Sobolev embedding,
duality and interpolation give, since $k\geq2$,
\[
\|\nabla V_\psi\|_{L^4}
\lesssim
\|\nabla V_\psi\|_{H^{1/2}}
\lesssim
\| V_\psi\|_{H^{3/2}}
\lesssim
\||\psi|^2\|_{H^{-1/2}}
\lesssim
\|\psi\|_{L^2}\|\psi\|_{L^4}
\lesssim
C(M_0)\|\psi\|_{H^{k-1}}^{1/2}.
\]
Moreover,
\[
\|\nabla^2 V_\psi\|_{L^2}
\lesssim
\||\psi|^2\|_{L^2}
=
\|\psi\|_{L^4}^2
\lesssim
C(M_0)\|\psi\|_{H^{k-1}}.
\]
For \(3\leq \ell\leq k\), elliptic regularity and the algebra property
of \(H^{k-1}(M)\) yield
\[
\|\nabla^\ell V_\psi\|_{L^2}
\lesssim
\||\psi|^2\|_{H^{\ell-2}}
\lesssim
\|\psi\|_{H^{k-1}}^2.
\]
Finally, Sobolev embedding and interpolation give
\[
\|\psi\|_{W^{k-1,4}}
\lesssim
\|\psi\|_{H^{k-\frac12}}
\lesssim
\|\psi\|_{H^{k-1}}^{1/2}\|\psi\|_{H^k}^{1/2},
\]
\[
\|\psi\|_{W^{k-2,\infty}}
\lesssim
\|\psi\|_{H^{k-\frac12}}
\lesssim
\|\psi\|_{H^{k-1}}^{1/2}\|\psi\|_{H^k}^{1/2},
\]
and, for \(3\leq\ell\leq k\),
\[
\|\psi\|_{W^{k-\ell,\infty}}
\lesssim
\|\psi\|_{H^{k-1}}.
\]
Substituting these estimates into \eqref{eq:comm-2d-start}, we obtain
\[
\|[\D^k,V_\psi]\psi\|_{L^2}
\lesssim
C_k\Big(
\|\psi\|_{H^{k-1}}\|\psi\|_{H^k}^{1/2}
+
\|\psi\|_{H^{k-1}}^{3/2}\|\psi\|_{H^k}^{1/2}
+
\|\psi\|_{H^{k-1}}^3
\Big),
\]
which is the desired estimate.

\end{proof}

Now, applying Lemma~\ref{lem:commutator-higher-order-2d} and  \eqref{eq:elliptic_hm} to \eqref{eq:Ek-basic} we obtain for any $k\geq 2$:
\[
|E_k'(t)|
\lesssim
C_k\Bigl(
\|\psi\|_{H^{k-1}}+\|\psi\|_{H^{k-1}}^{3/2}
\Bigr)
\bigl(1+E_k(t)^{1/4}\bigr)E_k(t)^{1/2}
+
C_k\|\psi\|_{H^{k-1}}^3E_k(t)^{1/2},
\]
whence
\begin{equation}\label{eq:Em_rough}
|E_k'(t)|
\lesssim
C_k\Bigl(
\|\psi\|_{H^{k-1}}+\|\psi\|_{H^{k-1}}^{3/2}+\|\psi\|_{H^{k-1}}^3
\Bigr)\Bigl(E_k(t)^{1/2}+E_k(t)^{3/4}\Bigr).
\end{equation}
By the induction hypothesis \eqref{indass} applied to the solution at level $k-1$, we get
\begin{equation}\label{eq:Em_before_Young}
|E_k'(t)|
\leq
Ce^{Ct}\Bigl(E_k(t)^{1/2}+E_k(t)^{3/4}\Bigr)
\qquad
\text{for all } t\in[0,T_*(\psi_0)),
\end{equation}
for a suitable constant $C>0$.
We now apply Young's inequality in the forms
\[
e^{Ct}E_k(t)^{1/2}\leq \varepsilon E_k(t)+C_\varepsilon e^{2Ct},
\qquad
e^{Ct}E_k(t)^{3/4}\leq \varepsilon E_k(t)+C_\varepsilon e^{4Ct}.
\]
Hence, after remodulating the constant,
\[
E_k'(t)\leq \varepsilon E_k(t)+C_\varepsilon e^{Ct}.
\]
Gr\"onwall's lemma gives (set e.g. $\varepsilon=1$)
\[
E_k(t)\leq C_{k,0}e^{C_{k,1}t},
\qquad
t\in[0,T_*(\psi_0)).
\]
which together with \eqref{eq:elliptic_hm} concludes the inductive step.

\medskip
Estimate above rules out finite-time blow-up in $H^k(M)$, and therefore the blow-up alternative in Theorem \ref{thm:main-2d} (i) implies that
$
T_*(\psi_0)=+\infty
$
and this concludes the proof.

\section{Proof of Theorem \ref{thm:main-3d}}

The argument in the $3d$ case follows the one in $2d$ quite closely, but some necessary modifications are needed: in particular, as we shall see, the proof of upper bounds for the growth of the $H^k$ norms of the solutions requires the assumptions of a (apriori) bound on the $H^{1/2}$ norm.
Therefore, we preliminary need the following result:  (notice that as in the previous section, we shall here systematically omit the dependence on the dimension on the manifold and denote by $M=M^3$).

\begin{lemma}\label{lem:positive-energy-3d}
Let $(M,g)$ be a $3$-dimensional compact spin manifold without boundary, let $\D$ be the massive
Dirac operator on $M$, and let
\[
\Pi_+ := \mathbf 1_{(0,\infty)}(\D)
\]
denote the spectral projection of $\D$ onto its strictly positive spectrum.

Let $\psi$ be a sufficiently smooth solution to \eqref{eq:NLD3}. Then, for all $t\in[0,T]$, the following hold:
\begin{enumerate}
\item[(i)] $\Pi_+\psi(t)=\psi(t)$;
\item[(ii)] $\|\psi(t)\|_{L^2(M)}=\|\psi_0\|_{L^2(M)}$;
\item[(iii)] The energy $E_+:\Pi_+H^{1/2}\rightarrow \R$ 
\begin{equation}\label{eq:positive-energy-functional}
E_+(\psi):=
\frac12\langle \D\psi,\psi\rangle_{L^2(M)}
+\frac14\big\|(\mu^2-\Delta_g)^{-1/2}|\psi|^2\big\|_{L^2(M)}^2
\end{equation}
is conserved:
\[
E_+(\psi(t))=E_+(\psi_0).
\]
\item[(iv)]
There exists a constant $C_{1/2}>0$, depending only on $(M,g)$, $m$, $\mu$,
$\|\psi_0\|_{L^2(M)}$, and $E_+(\psi_0)$, such that
\begin{equation}\label{eq:uniform-Hhalf}
\|\psi(t)\|_{H^{1/2}(M)}\le C_{1/2},
\qquad t\in[0,T].
\end{equation}
\end{enumerate}

\end{lemma}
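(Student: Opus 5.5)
The plan is to derive each item directly from the structure of \eqref{eq:NLD3}, working with smooth solutions and passing to the general case by density.

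For (i), apply $\Pi_-:=\mathbf 1_{(-\infty,0]}(\D)=I-\Pi_+$ to the equation. Since $\Pi_-$ commutes with $\D$ and $\Pi_-\Pi_+=0$, we get $i\partial_t(\Pi_-\psi)=\D\Pi_-\psi$ with $\Pi_-\psi(0)=\Pi_-\psi_0=0$. Uniqueness for the free Dirac flow, or equivalently the Duhamel formula $\Pi_-\psi(t)=e^{-it\D}\Pi_-\psi_0$, then gives $\Pi_-\psi(t)\equiv0$.

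For (ii) and (iii), differentiate in time. For the charge, $\tfrac{d}{dt}\|\psi\|^2_{L^2}=2\Re\langle -i\D\psi-i\Pi_+(V_\psi\psi),\psi\rangle$. The linear term vanishes by self-adjointness. For the nonlinear term, $\Pi_+$ is an orthogonal projection and $\Pi_+\psi=\psi$ by (i), so $\langle \Pi_+(V_\psi\psi),\psi\rangle=\langle V_\psi\psi,\psi\rangle=\int V_\psi|\psi|^2\in\R$; hence this term vanishes as well. For the energy, first compute the variation of the quartic term: since $(\mu^2-\Delta_g)^{-1/2}$ is self-adjoint and real,
\[
\tfrac{d}{dt}\tfrac14\|(\mu^2-\Delta_g)^{-1/2}|\psi|^2\|^2=\tfrac12\langle (\mu^2-\Delta_g)^{-1}|\psi|^2,\partial_t|\psi|^2\rangle=\Re\langle V_\psi\psi,\partial_t\psi\rangle .
\]
It follows that $\tfrac{d}{dt}E_+=\Re\langle \D\psi+V_\psi\psi,\partial_t\psi\rangle$. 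Because $\partial_t\psi\in\operatorname{Ran}\Pi_+$, we may replace $V_\psi\psi$ by $\Pi_+(V_\psi\psi)$, so $\tfrac{d}{dt}E_+=\Re\langle i^{-1}\cdot i(\D\psi+\Pi_+\mathcal N),\partial_t\psi\rangle=\Re\, i\|\partial_t\psi\|^2=0$. A sign check is needed here: the equation has $+\Pi_+\mathcal N$, which corresponds to the $+\tfrac14$ in $E_+$.

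For (iv), which is the main step, the key point is coercivity of $\langle\D\psi,\psi\rangle$ on $\operatorname{Ran}\Pi_+$. On this range $\langle \D\psi,\psi\rangle=\sum_{\lambda_j>0}\lambda_j|c_j|^2$. The eigenvalues are discrete, so there is a smallest positive one $\lambda_{\min}>0$. Using $1+\lambda_j^2\le C\lambda_j^2$ for $\lambda_j\ge\lambda_{\min}$, we get $(1+\lambda_j^2)^{1/2}\le C\lambda_j$, and therefore $\|\psi\|_{H^{1/2}}^2\le C\langle\D\psi,\psi\rangle$. The quartic term is nonnegative, so
\[
\|\psi(t)\|_{H^{1/2}}^2\le 2C\,E_+(\psi(t))=2C\,E_+(\psi_0),
\]
and we should also check that $E_+(\psi_0)$ is finite for $\psi_0\in H^{1/2}$ via $H^{1/2}\hookrightarrow L^3$, $|\psi|^2\in L^{3/2}\subset H^{-1}$. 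Since the mass is not used, one could alternatively add $M_0^2$ to avoid needing a spectral gap argument when $m=0$. Discreteness of the spectrum already gives the gap away from zero. The only delicate points are the rigorous justification of the time derivatives, which requires enough regularity (hence the assumption of smooth solutions and density using the local theory), and the correct sign bookkeeping in (iii).
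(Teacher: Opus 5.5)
Your proposal is correct and matches the paper's proof essentially step by step: (i) by applying $\Pi_-=I-\Pi_+$ and using uniqueness for the free flow, (ii) by self-adjointness of $\D$ and $\Pi_+$ together with (i), and (iv) by the spectral gap $\inf\{\lambda_j:\lambda_j>0\}>0$ and nonnegativity of the quartic term. The only difference is cosmetic and sits in (iii). You write $\frac{d}{dt}E_+=\Re\langle i\partial_t\psi,\partial_t\psi\rangle=0$, after replacing $V_\psi\psi$ by $\Pi_+(V_\psi\psi)$ via $\partial_t\psi\in\operatorname{Ran}\Pi_+$, which is a compact form of the paper's explicit cancellation of the two terms $\pm\Im\langle\D\psi,V_\psi\psi\rangle_{L^2}$.
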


\begin{proof}
First of all notice that, since by standard functional calculus for self-adjoint elliptic operators on compact manifolds
$\Pi_+$ is a pseudodifferential operator of order zero, for every
$s\in\mathbb R$,
\[
\|\Pi_+ f\|_{H^s(M)} \lesssim \|f\|_{H^s(M)}.
\]
Moreover, $
[\Pi_+, \D]=0.$
Let now $\Pi_-:=I-\Pi_+$. Since $\Pi_+$ commutes with $\D$, so does $\Pi_-$. Applying $\Pi_-$ to
\eqref{eq:NLD3}, and using that $\Pi_-\Pi_+=0$, we obtain
\[
i\partial_t(\Pi_-\psi)=\D(\Pi_-\psi),
\qquad
\Pi_-\psi(0)=\Pi_-\psi_0=0.
\]
By uniqueness for the linear equation,
\[
\Pi_-\psi(t)=0
\qquad \text{for all } t\in[0,T],
\]
hence
\[
\Pi_+\psi(t)=\psi(t).
\]
This proves (i).

The conservation of charge, $(ii)$ is completely standard given that $\D$ is self-adjoint and that $\Pi^*_+=\Pi_+$.

We now prove $(iii)$\footnote{We shall prove the conservation law for smooth projected data. For
general data in \(\Pi_+H^k\), \(k\ge1\), they follow by approximation
with smooth projected data, persistence of regularity, and continuity of
the flow map.} , that is the conservation of $E_+(\psi)$. Using the self-adjointness of $\D$,
\[
\frac{d}{dt}\frac12\langle \D\psi,\psi\rangle_{L^2}
=
\Re\langle \D\psi,\partial_t\psi\rangle_{L^2}.
\]
By \eqref{eq:NLD3},
\[
\Re\langle \D\psi,\partial_t\psi\rangle_{L^2}
=
\Re\langle \D\psi,-i\D\psi-i\Pi_+(V_\psi\psi)\rangle_{L^2}.
\]
The first term vanishes, and since $\D\psi\in\operatorname{Ran}\Pi_+$ by (i), we get
\[
\frac{d}{dt}\frac12\langle \D\psi,\psi\rangle_{L^2}
=
-\Im\langle \D\psi,V_\psi\psi\rangle_{L^2}.
\]
On the other hand, since $(\mu^2-\Delta_g)^{-1}$ is self-adjoint,
\[
\frac14\big\|(\mu^2-\Delta_g)^{-1/2}|\psi|^2\big\|_{L^2}^2
=
\frac14\langle |\psi|^2,V_\psi\rangle_{L^2},
\]
and therefore
\[
\frac{d}{dt}\frac14\big\|(\mu^2-\Delta_g)^{-1/2}|\psi|^2\big\|_{L^2}^2
=
\frac12\langle \partial_t|\psi|^2,V_\psi\rangle_{L^2}
=
\Re\langle \partial_t\psi,V_\psi\psi\rangle_{L^2}.
\]
Using again \eqref{eq:NLD3},
\[
\Re\langle \partial_t\psi,V_\psi\psi\rangle_{L^2}
=
\Re\langle -i\D\psi-i\Pi_+(V_\psi\psi),V_\psi\psi\rangle_{L^2}.
\]
Now
\[
\Re\langle -i\D\psi,V_\psi\psi\rangle_{L^2}
=
\Im\langle \D\psi,V_\psi\psi\rangle_{L^2},
\]
while
\[
\langle \Pi_+(V_\psi\psi),V_\psi\psi\rangle_{L^2}
=
\|\Pi_+(V_\psi\psi)\|_{L^2(M)}^2\in\mathbb R.
\]
Hence
\[
\frac{d}{dt}\frac14\big\|(\mu^2-\Delta_g)^{-1/2}|\psi|^2\big\|_{L^2}^2
=
\Im\langle \D\psi,V_\psi\psi\rangle_{L^2}.
\]
Summing the two identities, we conclude that
\[
\frac{d}{dt}E_+(\psi(t))=0.
\]
This proves (iii).

Finally, we prove $(iv)$. Since $\D$ is a self-adjoint elliptic first-order operator on the compact manifold $M$, its spectrum is
real and discrete, and there exists an orthonormal basis of eigenspinors $(e_j)$ with
$\D e_j=\lambda_j e_j$, $|\lambda_j|\to\infty$.
Since $\psi(t)\in \Pi_+H^{1/2}(M)$ for all $t$, we may thus write
\[
\psi(t)=\sum_{\lambda_j>0} a_j(t)e_j,
\]
Then
\[
\langle \D\psi(t),\psi(t)\rangle_{L^2(M)}
=
\sum_{\lambda_j>0}\lambda_j |a_j(t)|^2,
\]
whereas
\[
\|\psi(t)\|_{H^{1/2}(M)}^2
\sim
\sum_{\lambda_j>0}(1+\lambda_j^2)^{1/2}|a_j(t)|^2.
\]
Since the positive spectrum of $\D$ is discrete and contained in $(0,\infty)$, there exists
\[
\lambda_*:=\inf\{\lambda_j:\lambda_j>0\}>0,
\]
and therefore
\[
\lambda_j \sim (1+\lambda_j^2)^{1/2}
\qquad\text{for all }\lambda_j>0.
\]
Thus
\[
\langle \D\psi(t),\psi(t)\rangle_{L^2(M)}
\sim
\|\psi(t)\|_{H^{1/2}(M)}^2.
\]
Since the nonlinear part of \eqref{eq:positive-energy-functional} is nonnegative and both the charge and
the energy are conserved, \eqref{eq:uniform-Hhalf} follows and $(iv)$ is thus proved.
\end{proof}

\subsection{Local well-posedness}
The proof is very close to the one of Subsection \ref{2dLWP}; the only structural modification is in the need of the use of Strichartz estimtaes for the contraction argument. Let
\[
X_T^s:=L^\infty([0,T];H^s(M))\cap L^4([0,T];L^4(M)),
\qquad s\ge \frac12,
\]
and consider the map
\[
\Phi(\psi)(t):=e^{-it\D}\psi_0
-i\int_0^t e^{-i(t-\tau)\D}\Pi_+(V_\psi\psi)(\tau)\,d\tau.
\]
By unitarity, the Strichartz estimate for the wave-admissible pair
\((4,4)\), and Minkowski's inequality, we have
\[
\|e^{-it\mathcal D}\psi_0\|_{X_T^s}
\lesssim_T \|\psi_0\|_{H^s(M)}
\]
and, for every \(F\in L^1([0,T];H^s(M))\),
\[
\left\|
\int_0^t e^{-i(t-\tau)\mathcal D}F(\tau)\,d\tau
\right\|_{X_T^s}
\lesssim_T
\|F\|_{L^1([0,T];H^s(M))}.
\]
Since \(\Pi_+\) is bounded on \(H^s(M)\), it remains to estimate
\(V_\psi\psi\). We claim that, for every \(s\geq 1/2\),
\begin{equation}\label{eq:3d-nonlinear-LWP}
\|V_\psi\psi\|_{L^1([0,T];H^s(M))}
\lesssim
(T^{1/2}+T)\|\psi\|_{X_T^s}^3 .
\end{equation}
We first consider the case \(1/2\leq s\leq 2\). Since
\(\psi\in L^4([0,T];L^4(M))\), we have
\[
\||\psi|^2\|_{L^2([0,T];L^2(M))}
\lesssim
\|\psi\|_{L^4([0,T];L^4(M))}^2.
\]
By elliptic regularity,
\[
\|V_\psi\|_{L^2([0,T];H^2(M))}
\lesssim
\|\psi\|_{L^4([0,T];L^4(M))}^2.
\]
Moreover, in dimension three, multiplication by an \(H^2\) function is
bounded on \(H^s\) for \(0\leq s\leq 2\). Hence
\[
\|V_\psi\psi\|_{L^1_TH^s}
\lesssim
\|V_\psi\|_{L^2_TH^2}\|\psi\|_{L^2_TH^s}
\lesssim
T^{1/2}\|\psi\|_{L^4_TL^4_x}^2
\|\psi\|_{L^\infty_TH^s}
\lesssim
T^{1/2}\|\psi\|_{X_T^s}^3.
\]
If \(s>2\), then \(H^s(M)\) is a Banach algebra. Therefore
\[
\|V_\psi(t)\|_{H^s}
\lesssim
\|V_\psi(t)\|_{H^{s+2}}
\lesssim
\||\psi(t)|^2\|_{H^s}
\lesssim
\|\psi(t)\|_{H^s}^2,
\]
and consequently
\[
\|V_\psi\psi\|_{L^1_TH^s}
\lesssim
T\|\psi\|_{L^\infty_TH^s}^3
\lesssim
T\|\psi\|_{X_T^s}^3.
\]
Combining the two cases gives \eqref{eq:3d-nonlinear-LWP}.
Indeed, for $\frac12\le s\le 2$ one uses elliptic regularity,
\[
\|V_\psi\|_{L^2([0,T];H^2(M))}\lesssim \|\psi\|_{L^4([0,T];L^4(M))}^2,
\]
together with the fact that multiplication by an $H^2$ function is bounded on $H^s$,
while for $s>2$ one uses that $H^s(M)$ is a Banach algebra.
Therefore
\[
\|\Phi(\psi)\|_{X_T^s}
\lesssim_T
\|\psi_0\|_{H^s(M)}
+
(T^{1/2}+T)\|\psi\|_{X_T^s}^3.
\]
The corresponding Lipschitz estimate is obtained in the same way: for \(\psi,\phi\in X_T^s\), one has
\[
\|\Pi_+(V_\psi\psi-V_\phi\phi)\|_{L^1([0,T];H^s(M))}
\lesssim
(T^{1/2}+T)
\bigl(\|\psi\|_{X_T^s}^2+\|\phi\|_{X_T^s}^2\bigr)
\|\psi-\phi\|_{X_T^s}.
\]
Indeed, this follows from the decomposition
\[
V_\psi\psi-V_\phi\phi
=
V_\psi(\psi-\phi)+(V_\psi-V_\phi)\phi,
\qquad
V_\psi-V_\phi
=
(\mu^2-\Delta_g)^{-1}\bigl((|\psi|^2-|\phi|^2)\bigr),
\]
and from the same elliptic and Sobolev product estimates used above.. The contraction argument is now
identical to the one in Subsection~3.1, and yields local well-posedness together with the usual blow-up
alternative.

\begin{remark}\label{rem:why-strichartz-3d}
Let us stress that in dimension two, the local theory at the threshold $s\ge \frac12$ has been obtained by a direct fixed-point argument in $L^\infty([0,T];H^s(M))$, thanks to the Sobolev embedding $H^s(M)\hookrightarrow L^4(M)$ for $s\ge \frac12$. On the other hand, in dimension three the same direct argument would hold for $s\ge \frac34$; in order to lower this threshold to $s=1/2$ we need to rely also on Strichartz estimates.

Also, let us remark that for this local well-posedness result the projection operator $\Pi_+$ does not play any role, and indeed we could prove the very same result for the ``unprojected" version of system \eqref{eq:NLD3}.
\end{remark}

\subsection{Exponential growth of Sobolev norms and global well-posedness}
The argument is parallel to the two-dimensional one, and thus we shall skip some details. The main
differences are that the equation contains the projection \(\Pi_+\), and
that in dimension three we use the uniform \(H^{1/2}\)-bound from
Lemma~\ref{lem:positive-energy-3d}. As for the $2$d case, we prove the a priori estimates for smooth projected solutions; the
general case follows by density
 \medskip

Let \(k\geq1\) and set
$
E_k(t):=\|\D^k\psi(t)\|_{L^2(M)}^2;
$
Then,  there exists
\(C_{E,k}>0\) such that
\begin{equation}\label{eq:elliptic-k-3d}
\|\psi(t)\|_{H^k(M)}
\leq
C_{E,k}\bigl(M_0+E_k(t)^{1/2}\bigr),
\qquad
M_0:=\|\psi_0\|_{L^2(M)} .
\end{equation}
Using the equation
\[
\partial_t\psi=-i\D\psi-i\Pi_+(V_\psi\psi),
\]
we obtain
\[
\frac12E_k'(t)
=
\Re\langle -i\D^{k+1}\psi-i\D^k\Pi_+(V_\psi\psi),
\D^k\psi\rangle_{L^2}.
\]
The linear term vanishes by self-adjointness of \(\D\). Moreover,
\(\D\) commutes with \(\Pi_+\), and since \(\psi(t)\in\operatorname{Ran}\Pi_+\),
also \(\D^k\psi(t)\in\operatorname{Ran}\Pi_+\). Hence, using the
self-adjointness of \(\Pi_+\),
\[
\langle \D^k\Pi_+(V_\psi\psi),\D^k\psi\rangle_{L^2}
=
\langle \D^k(V_\psi\psi),\D^k\psi\rangle_{L^2}.
\]
Now
\[
\D^k(V_\psi\psi)=V_\psi\D^k\psi+[\D^k,V_\psi]\psi.
\]
Since \(V_\psi\) is real-valued we are left with
\[
\frac12E_k'(t)
=
\Im\langle [\D^k,V_\psi]\psi,\D^k\psi\rangle_{L^2}
\]
and consequently
\begin{equation}\label{eq:Ek-basic-3d}
|E_k'(t)|
\lesssim
\|[\D^k,V_\psi]\psi\|_{L^2(M)}E_k(t)^{1/2}.
\end{equation}

We now prove the desired estimate by induction on \(k\).

\medskip

$\bullet$ \emph{Case \(k=1\).}
Since
\[
[\D,V_\psi]\psi=-i\gamma^0c(dV_\psi)\psi,
\]
Hölder's inequality gives
\[
\|[\D,V_\psi]\psi\|_{L^2}
\lesssim
\|dV_\psi\|_{L^6}\|\psi\|_{L^3}.
\]
By Sobolev embedding and Lemma~\ref{lem:positive-energy-3d},
\[
\|\psi(t)\|_{L^3}
\lesssim
\|\psi(t)\|_{H^{1/2}}
\leq
C_{1/2}.
\]
Moreover, using elliptic regularity for \((\mu^2-\Delta_g)^{-1}\),
Sobolev embedding, and interpolation,
\[
\begin{aligned}
\|dV_\psi(t)\|_{L^6}
&\lesssim
\|V_\psi(t)\|_{H^2}
\lesssim
\||\psi(t)|^2\|_{L^2}
=
\|\psi(t)\|_{L^4}^2  \\
&\lesssim
\|\psi(t)\|_{H^{3/4}}^2
\lesssim
\|\psi(t)\|_{H^{1/2}}\|\psi(t)\|_{H^1}
\lesssim
C_{1/2}\|\psi(t)\|_{H^1}.
\end{aligned}
\]
Therefore \eqref{eq:Ek-basic-3d} with \(k=1\) yields
\[
|E_1'(t)|
\lesssim
C_{1/2}^2\|\psi(t)\|_{H^1}E_1(t)^{1/2}.
\]
Using \eqref{eq:elliptic-k-3d}, we get
\[
|E_1'(t)|
\lesssim
C_{1/2}^2
\bigl(M_0+E_1(t)^{1/2}\bigr)E_1(t)^{1/2}
\leq
C(1+E_1(t)).
\]
Hence, by Gronwall's lemma,
\[
E_1(t)+1\leq (E_1(0)+1)e^{Ct},
\qquad t\in[0,T_*(\psi_0)).
\]
Combining this with \eqref{eq:elliptic-k-3d}, we obtain
\[
\|\psi(t)\|_{H^1(M)}
\leq
C(1+\|\psi_0\|_{H^1})e^{Ct}.
\]

\medskip

$\bullet$ \emph{Case \(k\geq2\).}
We assume that the conclusion already holds at level $k-1$, namely that
\begin{equation}\label{indass2}
\|\psi(t)\|_{H^{k-1}(M)}\leq C_{k-1,0}e^{C_{k-1,1}t}
\qquad
\text{for all } t\in [0,T_*(\psi_0)).
\end{equation}
We prove the corresponding estimate at level $k$. We need the following

\begin{lemma}\label{lem:commutator-higher-order-3d}
Let \(k\geq2\) be an integer. There exists a constant
\(C_k=C_k(M,g,m,\mu,k,M_0)\) such that
\begin{equation}\label{eq:commutator-higher-order-3d}
\|[\D^k,V_\psi]\psi\|_{L^2(M)}
\leq
C_k\Big(
\|\psi\|_{H^{1/2}(M)}
\|\psi\|_{H^{k-1}(M)}^{3/2}
\|\psi\|_{H^k(M)}^{1/2}
+
\|\psi\|_{H^{k-1}(M)}^3
\Big).
\end{equation}
\end{lemma}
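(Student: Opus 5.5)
We follow the proof of Lemma~\ref{lem:commutator-higher-order-2d} and only change the Hölder and Sobolev exponents to the three-dimensional ones. First, we expand the commutator by \eqref{gencomm} and the Leibniz rule \eqref{leibrule}. This writes $[\D^k,V_\psi]\psi$ as a finite sum of contractions $B_{\ell,j}(\nabla^\ell V_\psi,\nabla^j\psi)$ with $1\le \ell\le k$ and $0\le j\le k-\ell$, whose coefficients are smooth and bounded. We then split the sum into three groups and use three-dimensional Hölder pairings:
\[
\|[\D^k,V_\psi]\psi\|_{L^2}\lesssim \|\nabla V_\psi\|_{L^6}\|\psi\|_{W^{k-1,3}}+\|\nabla^2V_\psi\|_{L^3}\|\psi\|_{W^{k-2,6}}+\sum_{\ell=3}^k\|\nabla^\ell V_\psi\|_{L^2}\|\psi\|_{W^{k-\ell,\infty}}.
\]
If $k=2$, the last sum is empty.

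\emph{Bounds on the potential.} As in the $k=1$ case of this section,
\[
\|\nabla V_\psi\|_{L^6}\lesssim \|V_\psi\|_{H^2}\lesssim\|\psi\|_{L^4}^2\lesssim \|\psi\|_{H^{1/2}}\|\psi\|_{H^1}\le \|\psi\|_{H^{1/2}}\|\psi\|_{H^{k-1}}.
\]
For the second-order term we use $H^{1/2}\hookrightarrow L^3$ and elliptic regularity:
\[
\|\nabla^2V_\psi\|_{L^3}\lesssim\|V_\psi\|_{H^{5/2}}\lesssim\||\psi|^2\|_{H^{1/2}}.
\]
We bound the product by $\|\psi\|_{L^\infty}\|\psi\|_{H^{1/2}}+\ldots$ when $k-1\ge2$. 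When $k=2$ we use the fractional Leibniz rule instead, $\||\psi|^2\|_{H^{1/2}}\lesssim \|\psi\|_{L^6}\|\psi\|_{W^{1/2,3}}\lesssim\|\psi\|_{H^1}^2$, which can be absorbed into the cubic term. For $\ell\ge3$, the algebra property of $H^{k-1}$ (valid once $k-1\ge2$) and elliptic regularity give
\[
\|\nabla^\ell V_\psi\|_{L^2}\lesssim\||\psi|^2\|_{H^{\ell-2}}\lesssim\|\psi\|_{H^{k-1}}^2.
\]
The constraint $\ell-2\le k-2$ also lets us interpolate whenever the algebra property is not available.

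\emph{Bounds on the spinor.} Sobolev embedding and interpolation give
\[
\|\psi\|_{W^{k-1,3}}\lesssim\|\psi\|_{H^{k-1/2}}\lesssim\|\psi\|_{H^{k-1}}^{1/2}\|\psi\|_{H^k}^{1/2}.
\]
Combined with the $L^6$ bound on $\nabla V_\psi$, this produces exactly the main term
\[
\|\psi\|_{H^{1/2}}\|\psi\|_{H^{k-1}}^{3/2}\|\psi\|_{H^k}^{1/2}.
\]
For the remaining groups, $\|\psi\|_{W^{k-2,6}}\lesssim\|\psi\|_{H^{k-1}}$ and, for $\ell\ge3$, $\|\psi\|_{W^{k-\ell,\infty}}\lesssim\|\psi\|_{H^{k-1}}$ since $k-\ell+3/2<k-1$ after $\ell\ge3$. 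Hence those contributions are bounded by $\|\psi\|_{H^{k-1}}^3$, using $\|\psi\|_{H^{1/2}}\le\|\psi\|_{H^{k-1}}$ and the dependence of the constant on $M_0$ where needed.

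\emph{Main obstacle.} The delicate point is the $\ell=2$ term when $k=2$. There no derivative is left on $\psi$, and $\nabla^2V_\psi$ must go into $L^3$ or $L^2$ without using more than $H^1$ control. If the $H^{5/2}$ route above proves too lossy, we will instead pair $\|\nabla^2V_\psi\|_{L^2}\lesssim\|\psi\|_{L^4}^2$ with $\|\psi\|_{L^\infty}$. We then interpolate $\|\psi\|_{L^\infty}\lesssim\|\psi\|_{H^{1}}^{1/2}\|\psi\|_{H^2}^{1/2}$ and $\|\psi\|_{L^4}^2\lesssim\|\psi\|_{H^{1/2}}\|\psi\|_{H^1}$, which again lands in the first term of \eqref{eq:commutator-higher-order-3d}. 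The remaining verifications are routine bookkeeping of exponents.
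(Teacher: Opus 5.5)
Your proof is correct and follows the paper's argument essentially line by line: you use the same commutator expansion, the same three Hölder pairings $L^6\times W^{k-1,3}$, $L^3\times W^{k-2,6}$ and $L^2\times W^{k-\ell,\infty}$, and the same bounds for the first and third groups. The only deviation is in the $\ell=2$ group, where the paper simply invokes $L^3$ elliptic regularity, $\|\nabla^2V_\psi\|_{L^3}\lesssim\||\psi|^2\|_{L^3}=\|\psi\|_{L^6}^2\lesssim\|\psi\|_{H^1}^2$, uniformly in $k\ge2$; this makes your $H^{5/2}$/fractional Leibniz detour, the case split at $k=2$, and the Agmon-type fallback in your last paragraph (which is also valid) unnecessary.
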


\begin{proof}
We prove the estimate for smooth spinors; the general case follows by
density. The algebraic commutator identity and the Leibniz rule give
\[
[\D^k,V_\psi]\psi
=
\sum_{r=0}^{k-1}\D^r[\D,V_\psi]\D^{k-1-r}\psi
=
-i\sum_{r=0}^{k-1}
\D^r\left(\gamma^0c(dV_\psi)\D^{k-1-r}\psi\right).
\]
Expanding the right-hand side gives a finite sum of terms of the form
\[
B_{\ell,j}\bigl(\nabla^\ell V_\psi,\nabla^j\psi\bigr),
\qquad
1\leq\ell\leq k,\quad 0\leq j\leq k-\ell,
\]
where the \(B_{\ell,j}\)'s denote contractions with Clifford matrices,
\(\gamma^0\), and smooth bounded geometric coefficients. Hence, by
Hölder's inequality in dimension three,
\begin{equation}\label{eq:comm-3d-start}
\begin{aligned}
\|[\D^k,V_\psi]\psi\|_{L^2}
\lesssim\;&
\|\nabla V_\psi\|_{L^6}\|\psi\|_{W^{k-1,3}}
+
\|\nabla^2V_\psi\|_{L^3}\|\psi\|_{W^{k-2,6}} \\
&+
\sum_{\ell=3}^{k}
\|\nabla^\ell V_\psi\|_{L^2}
\|\psi\|_{W^{k-\ell,\infty}} .
\end{aligned}
\end{equation}

We estimate these terms separately. First,
\[
\|\nabla V_\psi\|_{L^6}
\lesssim
\|V_\psi\|_{H^2}
\lesssim
\||\psi|^2\|_{L^2}
=
\|\psi\|_{L^4}^2
\lesssim
\|\psi\|_{H^{1/2}}\|\psi\|_{H^{k-1}},
\]
where we used \(H^{3/4}(M)\hookrightarrow L^4(M)\), interpolation, and
\(k\geq2\). Moreover,
\[
\|\psi\|_{W^{k-1,3}}
\lesssim
\|\psi\|_{H^{k-\frac12}}
\lesssim
\|\psi\|_{H^{k-1}}^{1/2}\|\psi\|_{H^k}^{1/2}.
\]
Thus
\begin{equation}\label{eq:comm-3d-one}
\|\nabla V_\psi\|_{L^6}\|\psi\|_{W^{k-1,3}}
\lesssim
\|\psi\|_{H^{1/2}}
\|\psi\|_{H^{k-1}}^{3/2}
\|\psi\|_{H^k}^{1/2}.
\end{equation}

For the second term,
\[
\|\nabla^2V_\psi\|_{L^3}
\lesssim
\||\psi|^2\|_{L^3}
=
\|\psi\|_{L^6}^2
\lesssim
\|\psi\|_{H^1}^2
\lesssim
\|\psi\|_{H^{k-1}}^2,
\]
and
\[
\|\psi\|_{W^{k-2,6}}
\lesssim
\|\psi\|_{H^{k-1}}.
\]
Therefore
\begin{equation}\label{eq:comm-3d-two}
\|\nabla^2V_\psi\|_{L^3}\|\psi\|_{W^{k-2,6}}
\lesssim
\|\psi\|_{H^{k-1}}^3.
\end{equation}

Finally, for \(3\leq\ell\leq k\),
\[
\|\nabla^\ell V_\psi\|_{L^2}
\lesssim
\||\psi|^2\|_{H^{\ell-2}}
\lesssim
\|\psi\|_{H^{k-1}}^2,
\]
using the algebra property of \(H^{k-1}(M)\) in dimension three, while
\[
\|\psi\|_{W^{k-\ell,\infty}}
\lesssim
\|\psi\|_{H^{k-1}}.
\]
Thus
\begin{equation}\label{eq:comm-3d-high}
\|\nabla^\ell V_\psi\|_{L^2}
\|\psi\|_{W^{k-\ell,\infty}}
\lesssim
\|\psi\|_{H^{k-1}}^3,
\qquad 3\leq\ell\leq k.
\end{equation}

Combining \eqref{eq:comm-3d-start}--\eqref{eq:comm-3d-high} gives
\eqref{eq:commutator-higher-order-3d}.
\end{proof}

We now close the induction.
By \eqref{eq:Ek-basic-3d}, Lemma~\ref{lem:commutator-higher-order-3d},
Lemma~\ref{lem:positive-energy-3d}, and \eqref{eq:elliptic-k-3d}, we
obtain
\[
\begin{aligned}
|E_k'(t)|
&\lesssim
\Big(
\|\psi(t)\|_{H^{k-1}}^{3/2}\|\psi(t)\|_{H^k}^{1/2}
+
\|\psi(t)\|_{H^{k-1}}^3
\Big)E_k(t)^{1/2} \\
&\lesssim
\Big(
\|\psi(t)\|_{H^{k-1}}^{3/2}
+
\|\psi(t)\|_{H^{k-1}}^3
\Big)
\Big(E_k(t)^{1/2}+E_k(t)^{3/4}\Big).
\end{aligned}
\]
The induction hypothesis gives
\[
\|\psi(t)\|_{H^{k-1}}^{3/2}
+
\|\psi(t)\|_{H^{k-1}}^3
\leq
Ce^{Ct}.
\]
Hence
\[
|E_k'(t)|
\leq
Ce^{Ct}\Big(E_k(t)^{1/2}+E_k(t)^{3/4}\Big).
\]
By Young's inequality, after enlarging \(C\),
\[
E_k'(t)\leq E_k(t)+Ce^{Ct}.
\]
Gronwall's lemma gives
\[
E_k(t)\leq C_{k,0}e^{C_{k,1}t},
\qquad
t\in[0,T_*(\psi_0)).
\]
Using \eqref{eq:elliptic-k-3d}, we conclude that
\[
\|\psi(t)\|_{H^k(M)}
\leq
C_{k,0}e^{C_{k,1}t},
\qquad
t\in[0,T_*(\psi_0)).
\]
This closes the induction.

\medskip

The \(H^k\)-bound on every finite time interval rules out finite-time
blow-up in the local theory. Therefore \(T_*(\psi_0)=+\infty\) and the proof is concluded.

\medskip

{\bf Acknowledgments.} The authors acknowledge support from the Gruppo Nazionale per l'Analisi Matematica, la Probabilit\`{a} e le loro Applicazioni (GNAMPA); F.C acknolwedges support from the GNAMPA project ``Emergenza di fenomeni lineari e non lineari in meccanica quantistica relativistica".

\end{document}